\newtheorem{thm}{Theorem}[section]
\newtheorem{definition}[thm]{Definition}
\newtheorem{lemma}[thm]{Lemma}
\newcommand{\M}{{\mathcal M}}
\let\pa\partial
\def\eps{\varepsilon}
\def\d{\,{\rm d}}
\title{Mean field models for segregation dynamics}
\author{Martin Burger$^1$}
\address{$^1$ WWU M{\"u}nster, Fachbereich Mathematik, Einsteinstr. 62, Germany}
\author{Jan-Frederik Pietschmann$^2$}
\address{$^2$ Technische Universit\"at Chemnitz, Reichenhainer Stra\ss{}e 41, Germany}
\author{Helene Ranetbauer$^3$}
\author{Christian Schmeiser$^3$}
\address{$^3$ University of Vienna, Oskar-Morgenstern-Platz 1, Austria}
\author{Marie-Therese Wolfram$^4$}
\address{$^4$ University of Warwick, Coventry CV4 7AL, UK and \\
  RICAM, Altenbergerstr. 69, 4040 Linz, Austria and \\
TU Munich, Boltzmannstr. 3, 85748 Garching, Germany}
\begin{document}

\label{firstpage}
\maketitle

\begin{abstract}
  In this paper we derive and analyse mean-field models for the dynamics of groups of individuals undergoing a random walk. The random motion of individuals is only influenced by the perceived densities of the different groups present as well as the available space. All individuals have the tendency to stay within their own group and avoid the others. These interactions lead to the formation of aggregates in case of a single species, and to segregation in the case of multiple species. We derive two different mean-field models, which are based on these interactions and weigh local and non-local effects differently. We discuss existence and stability properties of solutions for both models and illustrate the rich dynamics with numerical simulations.
\end{abstract}

\section{Introduction}

In 1969 the American economist Thomas Schelling postulated that if individuals belonging to one of two groups have an (arbitrary small) preference for their own group, the groups segregate,  see \cite{schelling1969models}. His claim was supported by an agent based model, in which individuals from two groups move randomly on a discrete lattice. In these simulations agents decrease their transition probabilities if they are surrounded by a certain fraction of group members, otherwise they move to any available site. They also indicate that the discrete system converges to a stationary state with aggregated and segregated states. The form of these complex stationary states depends on the preference for the own group, the initial distribution of the agents, and the occupancy of the domain.\\

In this paper we propose and analyse two mean-field models, which are inspired by the Schelling dynamics. In the considered models individuals move randomly - their random motion is only influenced by the perceived density and the available physical space. We start by stating a general mean-field model for both groups, in which the transition rates as well as the diffusivities depend on the nonlocal perceived density (via a convolution kernel). Then we formally discuss two different scalings. In the first case individuals sense the densities of the own and other group in a large surrounding, but are only allowed to move locally. This corresponds to a particular scaling of the convolution kernel in the transition rates and yields a partial differential equation (PDE) with non-local diffusivity in the formal limit. In the second case individuals can move to any available site in the domain, but their transition rate depends on the locally sensed group densities only. Here we rescale the convolution kernel in the diffusivities, and obtain an integro differential equation in the limit. We refer to the first situation as \emph{non-local sensing and local jumps}, to the second as \emph{local sensing and non-local jumps} throughout this paper. \\
We discuss the existence of solutions for both models and analyse the stability of stationary states. In both models we observe the expected formation of aggregates in the single species model and segregated states for two species. The characteristic shape of these non-trivial stationary states depends on the specific interaction rules, the occupancy of the domain and the diffusivity.\\

Segregation dynamics have been observed in many mean-field models of single and multi species interacting particle systems. Turing instabilities in linear reaction diffusion systems are among the most prominent examples, see \cite{Turing37}. Here already a large disparity in the diffusion coefficients may lead to the formation of segregated states. Chemotaxis models, in which individuals are attracted by the gradient of a chemical substrate, are another prominent example for aggregation dynamics. In this case the attraction towards the chemical substrate may result in the blow up of solutions or the formation of complex stationary states, see \cite{BDSS2008}. These dynamics become even more complex for nonlinear diffusion. In the case of a single species non-linear diffusion (depending locally or non-locally on the density) may lead to the formation of aggregates, see for example \cite{burger2013individual,Anguige2008}. Here the diffusivity may become degenerate or even negative for certain parameter and density regimes, leading to ill-posed problems.\\
In multiple species problems additional effects such as cross diffusion may initiate or enhance aggregation or segregation dynamics. For example \cite{bertsch2010} showed that in a multi-species systems with porous medium type diffusion, populations remain separated, if they were separated initially. Also cross-diffusion, which arises in many systems describing interactions among different species can lead to segregation. 
Cross diffusion may be caused by finite size effects (see \cite{burger2010nonlinear, burger2016lane, bruna2012diffusion}), alignment (see \cite{degond2017new, zhang2011general}) or attractive and repulsive interactions (see \cite{canizo2010collective, tao2013competing, carrillo2018zoology, carrillo2017splitting,burger2016lane,BFFS2018, BFH2014}). In several of these models coarsening dynamics of clusters and segregated states can be observed in numerical simulations. In some models it was even possible to study these coarsening dynamics rigorously, see for example \cite{BDSS2008} or \cite{PSTV2011}. 

With appropriate choices of the ingredients, the models considered here are applicable to various situations in cell biology. An example is cell segregation dynamics (\cite{Batlle}), important in embryonic development and cancer prevention, where sensing is a local process. Nonlocal sensing in cell populations is typically due to chemical signals, which can also lead to segregation with bacterial colonies as an example (\cite{BenAmar,PSTV2011}). 
In this processes local jump models would typically be used, but anomalous motility patterns involving jumps are observed in several cell types (\cite{Dieterich}).

This paper is organised as follows: we start by deriving the two different types of mean-field equations in Section \ref{s:model}. Section \ref{sec:analysis} and Section \ref{sec:stationarystates} study the existence and long time behaviour of solutions for both models. We conclude by illustrating the stability results as well as the dynamics with various numerical examples in Section \ref{sec:numerics}.

\section{Derivation of the mean-field models}\label{s:model}

We start by introducing two mean-field models both describing the dynamics of two interacting species, which we shall refer to as red and blue ones throughout this paper.
Individuals from either group move randomly in space, only influenced by the sensed density and the physically available space. We recall that we consider two different types of sensing mechanisms and transition rates:
\begin{enumerate}
\item \textit{Local jumps and non-local sensing:} Particles are allowed to move locally; their transition rates depend on the specific non-local density and the available space.
 \item \textit{Non-local jumps and local sensing:} Individuals can move in the entire domain; their transition rates depend on the local density and the available space.
\end{enumerate}
We will formally derive the two different mean-field models for both cases. In the first setting we obtain a system of nonlinear diffusion equations with nonlocal density dependence of the diffusion coefficients. In the second case we derive a nonlinear integro differential equation. \\

\subsection{Preliminaries}
Inspired by the interaction rules described above, we consider the equation
\begin{align}\label{eq:mf}
\begin{aligned}
\partial_t c=\int_{\mathbb{R}^N} K_1(x-x')[(1-\rho)D_c' c'-(1-\rho') D_c c]dx',
\end{aligned}
\end{align}
where $c=r,b$, $\rho=r+b$ and $r=r(x,t),\, b=b(x,t)$ represent the probability of finding a red or blue particle in location $x$ at time $t$. Here the dash indicates the evaluation at $x'$, for instance $c'=c(x',t)$. Moreover, the diffusion coefficient is given by
$$D_c(x,t)=D_c((K_2*r(\cdot,t))(x),(K_2*b(\cdot,t))(x)), $$
with convolutions of the form
$$ (K_2*u(\cdot,t))(x) = \int_{\mathbb{R}^N} K_2(x-x') u(x',t)~dx'.$$
Here the functions $K_1,K_2$ correspond to appropriate interaction kernels, whose properties we will specify later.\\
This general continuum model includes the following considerations:
\begin{enumerate}[label=(\roman*)]
\item The availability of physical space via the factor $(1-\rho)$: Since $\rho$ denotes the total density and $1$ is the maximum density, individuals can only move to a position if it is not fully occupied.
\item The preference to stay close to the own group: We assume that the diffusivities $D_{c}$ are non-increasing with respect to the own species and non-decreasing with respect to the other. Therefore $D_r(\cdot, \cdot)$ is non-increasing in the first argument and non-decreasing in the second one. Obviously the opposite holds true for the function $D_b(\cdot,\cdot)$.
\item Local and non-local effects via the kernels $K_{i}$: The kernels $K_{i}$, $i=1,2$ are positive, radially symmetric and non-increasing functions. We assume that it is more expensive to move further away, which is included via the kernel $K_1$. The kernel $K_2$ accounts for the fact that the transition rate is stronger influenced by the local density than the density far away.
\end{enumerate}

Equation \eqref{eq:mf} is a general model for random motion of individuals with density dependent diffusivities. Such mean field models have been proposed and studied in the context of aggregation dynamics - for example as already mentioned in cell segregation dynamics or in the collective motion of cockroaches, see \cite{burger2013individual}. However in the latter case the mean field equation was derived from a density dependent random walk (with no size exclusion). Other applications include mean field models for pedestrian dynamics, which can be derived from a discrete lattice based hopping approach, cf. \cite{burger2016lane}.

In the following we formally derive the limiting equations in the case of local jumps and non-local sensing as well as the case of non-local jumps and local sensing. The limiting equations are obtained by localising either $K_1$ or $K_2$ and performing a formal linearization. 

\subsection{Local jumps and non-local sensing}
In the first model individuals only move to their immediate neighbourhood, hence we assume $K_1$ to be of the form
\begin{align}
K_1(x)&=\frac{1}{\epsilon^{N+2}}\tilde{K}\left( \frac{x}{\epsilon} \right)=\frac{1}{\epsilon^{N+2}}\tilde{k}\left(\left|\frac{x}{\epsilon}\right| \right),
\end{align}
with $\tilde{k}:[0,\infty) \mapsto [0,\infty)$ and $\epsilon > 0$. This scaling ensures that the second moment is independent of $\epsilon$ and we choose the following normalisation
\begin{align}\label{eq:normsecondmoment}
\int_{\mathbb{R}^N} \tilde{K}(z)|z|^2\;dz = \left|\mathcal{S}^{N-1}\right|\int_0^\infty \tilde{k}(r)r^{N+1}\,dr=2N,
\end{align} with $N$ being the space dimension and where $|\mathcal{S}^{N-1}|$ denotes the surface area of the $(N-1)$-dimensional unit sphere. Note that the particular choice of $K_1$ corresponds to local dynamics if $\epsilon \ll 1$. Using the rescaled kernel in equation \eqref{eq:mf} and the change of variables $\frac{x-x'}{\epsilon}=z$ yields
\begin{align*}
\partial_t c(x,t)=\frac{1}{\epsilon^2}\int_{\mathbb{R}^N} \tilde{K}(z)[(1-\rho(x,t))&D_c(x-\epsilon z,t) c(x-\epsilon z,t)\\
&-(1-\rho(x-\epsilon z,t)) D_c(x,t) c(x,t)]dz.
\end{align*}
Then a formal Taylor expansion in $\epsilon$ around $x$ gives
\begin{align*}
\partial_t c(x,t)=\frac{1}{2}\int_{\mathbb{R}^N} \tilde{K}(z)[(1-\rho(x,t))&z^{T}\nabla ^2 (D_c(x,t) c(x,t))z\\
&+D_c(x,t) c(x,t) z^T \nabla^2 \rho(x,t)z ]dz + O(\epsilon),
\end{align*}
where the first order terms cancelled due to the radial symmetry of $K$. This also implies that
\begin{align*}
\int_{\mathbb{R}^N} \tilde{K}(z) z_i z_j \,dz&= \delta_{i,j} \int_{\mathbb{R}^N} \tilde{K}(z) z_i^2 \, dz = \delta_{i,j} \frac{1}{N} \int_{\mathbb{R}^N} \tilde{K}(z) | z|^2 \, dz\\
&=\delta_{i,j} \frac{1}{N} \left|\mathcal{S}^{N-1}\right| \int_0^\infty \tilde{k}(r) |r|^{N+1} \, dr.
\end{align*}
Using \eqref{eq:normsecondmoment} and neglecting higher order terms in $\epsilon$
we obtain
\begin{align*}
\partial_t c&=(1-\rho)\Delta (D_c c(x,t))+D_c c\Delta\rho.
\end{align*}
Hence the full system, written in divergence form, reads as
\begin{align}\label{system1}
\begin{aligned}
\pa_t r&=\nabla \cdot [(1-\rho)\nabla(D_r(K_2\ast r,K_2\ast b)r)+r D_r(K_2\ast r,K_2\ast b)\nabla \rho]\\
\pa_t b &= \nabla \cdot [(1-\rho)\nabla (D_b(K_2\ast r,K_2\ast b)b)+b D_b(K_2\ast r,K_2\ast b)\nabla \rho],
\end{aligned}
\end{align}
where $K_2$ corresponds to the non-local sensing kernel. We recall that individuals have a preference for the own group.  Hence the diffusivities $D_r$ and $D_b$ are
non-increasing functions with respect to the own species. So the dynamics of $r$ and $b$ are driven by the non-local diffusion and the physically available space.\\
We will also analyse the corresponding single species model, obtained by setting $b=0$ in \eqref{system1}, later on. Here the diffusivity of the single species decreases
with the perceived density. Hence we expect the formation of aggregates. A similar single species model was proposed and analysed by \cite{burger2013individual}. However, since this model
does not include finite volume effects, measure valued steady states are possible. 

\subsection{Non-local jumps and local sensing}
In the case of non-local jumps and local sensing we assume that now $K_2$ is of the form
\begin{align}\label{e:k2}
K_2(x)&=\frac{1}{\epsilon^N}\tilde{K}\left(\frac{x}{\epsilon}\right),
\end{align}
and, in addition, that the mass of $\tilde{K}$ is normalised, i.e. $\int_{\mathbb{R}^N} \tilde{K}(x)\, \d x=1$. This choice, together with a change of variables to $\frac{x-x'}{\epsilon}=z$ and linearization of \eqref{eq:mf} in $\epsilon$ around x gives, again neglecting higher order terms, 
\begin{align*}
\partial_t c=\int_{\mathbb{R}^N} K_1(x-x')[(1-\rho)&D_c' c'-(1-\rho') D_c c]\,dx',
\end{align*}
where $D_c=D_c(r,b)$.
Then the full system reads as 
\begin{align}\label{systemnonlocal4}
\begin{aligned}
\partial_t r&=\int_{\mathbb{R}^N} K_1(x-x')[(1-\rho)D_r' r'-(1-\rho') D_r r]\,dx'\\
\partial_t b&=\int_{\mathbb{R}^N} K_1(x-x')[(1-\rho)D_b' b'-(1-\rho') D_b b]\,dx'\\
\end{aligned}
\end{align}

Note that system \eqref{systemnonlocal4} is a nonlinear integro-differential system. The evolution of the densities is influenced by the local density via $D_r$ and $D_b$. However, individuals can move to any available site in the domain with a rate depending on $K_1$, only. We will perform numerical simulations of the corresponding single species model, obtained by setting $b=0$ in \eqref{systemnonlocal4}. Again we will observe the formation of expected aggregated states.

\section{Analysis of the mean-field models}\label{sec:analysis}
In this section, we present global existence results for both models \eqref{system1} and \eqref{systemnonlocal4}. The result for system \eqref{system1} follows the idea presented in Thm 4.1  in \cite{berendsen2017cross} and the result for system \eqref{systemnonlocal4} is based a Picard Lindel\"of type theorem in Banach spaces. \\
From now on, we analyse both models \eqref{system1} and \eqref{systemnonlocal4} in the case of periodic boundary conditions. Hence, we choose the domain $\Omega$ to be the $N-$dimensional torus, i.e. $\Omega=\mathbb{T}^N$. Note that as the convolution of any function with a periodic function is again periodic, the occurring convolutions in the models are well-defined. Furthermore, for the rest of the paper we consider diffusion coefficients of the following form only
\begin{align}\label{diff_coeff}
D_r(p,q)=C_re^{-C_{rr} p+C_{rb} q},\text{ and } \quad D_b(p,q)=C_be^{C_{br}p-C_{bb}q},
\end{align}
with constants $C_r,C_b,C_{rr}, C_{rb}, C_{br}, C_{bb} > 0$. 
In \eqref{systemnonlocal4}  we will choose $(p,q)=(r,b)$, in \eqref{system1} $(p,q)=(K_2\ast r, K_2\ast b)$.

Since $r, b$ and $\rho$ represent densities, we introduce the set
\begin{align*}
\mathcal{M}=\{(r,\, b) \in L^2(\mathbb{T}^N)^2 : 0 < r,\, b; \, r+b=\rho < 1\text{ a.e.} \},
\end{align*}
and define the class of admissible convolution kernels as follows.
\begin{definition}[Admissible Kernel]\label{def:admissible} We say that a kernel $K$ is admissible if the following conditions are satisfied:
\begin{itemize}
 \item[\textbf{(K1)}] $K \in W^{1,1}(\mathbb{R}^N)$,
 \item[\textbf{(K2)}] $K$ is positive and radially symmetric, i.e. $K(x)=k(|x|)$ and $k$ is non-increasing,
 \item[\textbf{(K3)}] $k(|x|)$ behaves at most as singular as the Coulomb kernel as $|x|\to 0$.
\end{itemize}
\end{definition}

We consider system \eqref{system1} and \eqref{systemnonlocal4} 
with initial data $(r_I,\,b_I) \in\mathcal{ \overline M}$, i.e.
$$
r(x,0) = r_I(x)\text{ and }b(x,0) = b_I(x).
$$ 
Note that the constants 
\begin{align}\label{e:stationary}
r_0 := \frac{\int_{\mathbb{T}^N} r_I(x) dx}{\lvert \mathbb{T}^N \rvert} \text{ and } b_0 := \frac{\int_{\mathbb{T}^N} b_I(x) dx}{\lvert \mathbb{T}^N \rvert} 
\end{align}
are stationary solutions of system \eqref{system1} and \eqref{systemnonlocal4} and that both systems are conservative, that is  $\frac{d}{dt}\int_{\mathbb{T}^N}  r \, dx=\frac{d}{dt}\int_{\mathbb{T}^N} b\, dx = 0$.\medskip

\subsection{Local jumps and non-local sensing}

The existence argument for system \eqref{system1} follows the lines of a proof for a similar system studied in \cite{berendsen2017cross}. Indeed, due to the special choice of $D_r$ and $D_b$ in \eqref{diff_coeff} the system can be rewritten as 
\begin{align}\label{eq:alternative}
\begin{aligned}
\pa_t r&=\nabla \cdot \left[D_r(K_2\ast r, K_2\ast b)\left((1-\rho)\nabla r + r \nabla \rho + r(1-\rho)\nabla(-C_{rr}K_2\ast r + C_{rb}K_2\ast b) \right)\right]\\
\pa_t b&=\nabla \cdot \left[D_b(K_2\ast r, K_2\ast b)\left((1-\rho)\nabla b + b \nabla \rho + b(1-\rho)\nabla(C_{br}K_2\ast r - C_{bb}K_2\ast b) \right)\right].
\end{aligned}
\end{align}
In this form, the equation has a drift-diffusion structure, where the cross-diffusion terms are exactly the same as in \cite{Burger2010,berendsen2017cross}. However system \eqref{eq:alternative} has a different mobility (due to the multiplication with $D_r$ and $D_b$).

This system can be interpreted as a formal gradient flow structure with respect to a Wasserstein type metric, see \cite{Otto2001} for more details. The respective energy functional is 
\begin{align}\label{eq:entropy}
\begin{aligned}
E(r,b) &= \int_{\mathbb{T}^N} r\log r + b\log b + (1-\rho)\log (1-\rho)\\
&+ r(-C_{rr}K_2\ast r + C_{rb}K_2\ast b) + b(C_{br}K_2\ast r - C_{bb}K_2\ast b)\, dx
\end{aligned}
\end{align}
and the mobility matrix 
\[M(r,b)=\begin{pmatrix}
C_re^{-C_{rr} K_2\ast r+C_{rb}K_2\ast b}\, r(1-\rho) & 0 \\
0 & C_be^{C_{br} K_2\ast r-C_{bb}K_2\ast b}\, b(1-\rho)
\end{pmatrix}.\]
Hence system \eqref{eq:alternative} in formal gradient flow structure is given as
\begin{align*}
 \left(\begin{array}{c}
      \partial_t r \\
      \partial_t b
    \end{array}\right) 
= \nabla\cdot \left( M(r,b) \nabla \left(\begin{array}{c}
      \partial_r E(r,b) \\
      \partial_b E(r,b)
    \end{array}\right)\right).
\end{align*}
Note that with Definition \ref{def:admissible} of the admissible kernels, we can guarantee that there exists at least one minimizer of the energy functional \eqref{eq:entropy}, see Thm 2.5 in \cite{berendsen2017cross}. We use the local part of the entropy functional to define the so called entropy variables $u$ and $v$  as
\begin{align}\label{eq:entropyvariables}
u := \log r - \log ( 1-\rho),\; \text{ and }\; v := \log b - \log ( 1-\rho).
\end{align}
Inverting these relations yields the priori bounds $0 \le r,b$ and $r+b\le 1$, which are a crucial ingredient of the proof since no maximum principle is available. This is often called the boundedness-by-entropy principle, see \cite{Burger2010,jungel2015boundedness}. Together with bounds obtained from the entropy dissipation, this is enough to prove the following theorem:

\begin{thm}\label{thm:pdebnd}
Let $T>0$, $D_{r,b}$ given by \eqref{diff_coeff} and let $K_2$ denote an admissible kernel in the sense of Definition \ref{def:admissible}. Consider the PDE system \eqref{eq:alternative} on $\mathbb{T}^N$ with 
initial conditions 
\begin{align*}
r(x,\, 0)=r_I(x) \quad \text{and} \quad b(x,\, 0)=b_I(x),\quad \text{for a.e. }x \in {\mathbb{T}^N},
\end{align*}
with $(r_I,b_I)\in\overline{\mathcal{M}}$ and with periodic boundary conditions. Then there exists a weak solution $(r,\, b)$ in 
$$
W=(L^2((0,T),\, L^2({\mathbb{T}^N}))\cap H^1((0,\, T),\, H^{-1}({\mathbb{T}^N})))^2
$$
such that additionally 
$$\rho,\, \sqrt{1-\rho}r,\, \sqrt{1-\rho}b \in L^2((0,\,T),\, H^1({\mathbb{T}^N}))$$
and furthermore $(r,b)\in \overline{\mathcal{M}}$ a.e. in [0,T].
\end{thm}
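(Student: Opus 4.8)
The plan is to prove Theorem \ref{thm:pdebnd} by the boundedness-by-entropy method of J\"ungel, following closely the argument for the related (local) system in \cite{berendsen2017cross}; the nonlocal convolution terms turn out to be regularising rather than obstructive, so the new work is mostly bookkeeping for the extra mobility factors $D_{r,b}$ and the drift potentials $W_r:=-C_{rr}K_2\ast r+C_{rb}K_2\ast b$, $W_b:=C_{br}K_2\ast r-C_{bb}K_2\ast b$ appearing in \eqref{eq:alternative}. The crucial structural point is that the change of variables \eqref{eq:entropyvariables} has the explicit inverse $r=e^u/(1+e^u+e^v)$, $b=e^v/(1+e^u+e^v)$, $1-\rho=1/(1+e^u+e^v)$, which is defined for all $(u,v)\in\RR^2$ and takes values in $\{r,b>0,\ \rho<1\}$; hence any approximation posed in the variables $(u,v)$ delivers densities in $\overline{\mathcal M}$ for free, which substitutes for the unavailable maximum principle.

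First I would set up a semi-discrete, elliptically regularised scheme: fix $\tau=T/M$, $\eta>0$ and $m>N/2$ so that $H^m(\mathbb{T}^N)\hookrightarrow L^\infty$, and, given $(r^{k-1},b^{k-1})$, solve for $(u^k,v^k)\in H^m(\mathbb{T}^N)^2$ the system obtained by writing \eqref{eq:alternative} in entropy variables (using $r(1-\rho)\nabla u=(1-\rho)\nabla r+r\nabla\rho$), with backward Euler in time plus an $\eta(\cdot,\cdot)_{H^m}$ regularisation. Existence for each step is a Leray--Schauder fixed point argument (freezing the coefficients makes the problem linear and, thanks to the $\eta$-term, coercive on $H^m$), whose a priori bound is supplied by the \emph{discrete entropy estimate}: testing the $k$-th step with $(u^k,v^k)$ and using convexity of $h(r,b)=r\log r+b\log b+(1-\rho)\log(1-\rho)$ gives
\begin{align*}
&\int_{\mathbb{T}^N}h(r^k,b^k)\,dx+\tau\int_{\mathbb{T}^N}\!\big(D_r\,r(1-\rho)|\nabla u^k|^2+D_b\,b(1-\rho)|\nabla v^k|^2\big)\,dx\\
&\qquad+\eta\tau\big(\|u^k\|_{H^m}^2+\|v^k\|_{H^m}^2\big)\le\int_{\mathbb{T}^N}h(r^{k-1},b^{k-1})\,dx+C\tau,
\end{align*}
where the drift contribution is absorbed using $0<r,b<1$, $K_2\in W^{1,1}$ (so $\|\nabla W_{r,b}\|_{L^\infty}\le C\|\nabla K_2\|_{L^1}$) and the two-sided bounds on $D_{r,b}$. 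Summing in $k$, and noting $E(r_I,b_I)<\infty$, yields $\tau$- and $\eta$-uniform bounds; expanding $r(1-\rho)|\nabla u|^2+b(1-\rho)|\nabla v|^2$ in terms of $\nabla r,\nabla b,\nabla\rho$ shows it dominates $|\nabla\rho|^2+|\nabla(\sqrt{1-\rho}\,r)|^2+|\nabla(\sqrt{1-\rho}\,b)|^2$ up to constants, so the time-interpolants $r_\tau,b_\tau$ are bounded in $L^\infty((0,T)\times\mathbb{T}^N)$ with $\rho_\tau,\sqrt{1-\rho_\tau}\,r_\tau,\sqrt{1-\rho_\tau}\,b_\tau$ bounded in $L^2(0,T;H^1)$, and the scheme bounds the discrete $\partial_t(r_\tau,b_\tau)$ in $L^2(0,T;(H^m)')$ --- exactly the regularity claimed, with $(H^m)'$ in place of $H^{-1}$, which is upgraded in the limit.

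Then I would pass to the limit $\tau,\eta\to0$. The convolution terms are the easy ingredient: from $0\le r_\tau\le1$ and $K_2\in W^{1,1}$, both $K_2\ast r_\tau$ and $\nabla K_2\ast r_\tau$ are bounded in $L^\infty$ and converge (dominated convergence, once $r_\tau\to r$ a.e.), so $D_r(K_2\ast r_\tau,K_2\ast b_\tau)\to D_r(K_2\ast r,K_2\ast b)$ and $\nabla W_r^\tau\to\nabla W_r$ strongly with uniform $L^\infty$ bounds. For the flux I rewrite $(1-\rho_\tau)\nabla r_\tau+r_\tau\nabla\rho_\tau=\sqrt{1-\rho_\tau}\,\nabla(\sqrt{1-\rho_\tau}\,r_\tau)+\tfrac32 r_\tau\nabla\rho_\tau$, a product of a strongly convergent and a weakly $L^2$-convergent factor, hence weakly convergent to $(1-\rho)\nabla r+r\nabla\rho$; multiplying by $D_r^\tau$ and adding the drift term $D_r^\tau r_\tau(1-\rho_\tau)\nabla W_r^\tau$ (again strong $\times$ weak) identifies the limit flux, and the regularisation disappears since $|\eta(u_\eta,\phi)_{H^m}|\le\sqrt\eta(\sqrt\eta\|u_\eta\|_{H^m})\|\phi\|_{H^m}\le C\sqrt\eta\to0$. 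Passing to the a.e.\ limit in the pointwise constraints gives $(r,b)\in\overline{\mathcal M}$ a.e.

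The hard part is the compactness needed to reach a.e.\ convergence of $r_\tau$ and $b_\tau$ \emph{individually}: the entropy dissipation controls only $\rho$ and the \emph{degenerate} products $\sqrt{1-\rho}\,r$, $\sqrt{1-\rho}\,b$ in $L^2(0,T;H^1)$, not $\nabla r$ or $\nabla b$, so a standard Aubin--Lions lemma does not apply and one must invoke a nonlinear (Dubinskii / Dreher--J\"ungel) variant, combining the $L^2(H^1)$ bound on these nonlinear functions with the $L^2((H^m)')$ bound on $\partial_t(r_\tau,b_\tau)$; one must also be careful, when identifying the nonlinear flux limits, on the possible degeneracy set $\{\rho=1\}$. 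Everything else --- Leray--Schauder, the discrete chain rule for $h$, and the strong $\times$ weak products --- is routine.
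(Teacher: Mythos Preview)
Your proposal is correct and follows essentially the same route as the paper's proof, which is itself a brief sketch deferring to \cite{berendsen2017cross}: entropy variables $(u,v)$ via \eqref{eq:entropyvariables}, an implicit-in-time elliptically regularised scheme, Leray--Schauder for each step, the discrete entropy dissipation inequality (the paper records the same control of $(1-\rho)|\nabla\sqrt r|^2$, $(1-\rho)|\nabla\sqrt b|^2$, $|\nabla\sqrt{1-\rho}|^2$, $|\nabla\rho|^2$, with the convolution drifts producing only a $CT$ growth term), and passage to the limit. The only cosmetic difference is that the paper uses the time step $\tau$ itself as the regularisation coefficient with an $H^1$-type operator $\tau(\Delta u_k-u_k)$, whereas you introduce a separate $\eta$ and an $H^m$ regularisation with $m>N/2$; your choice is in fact the more careful one, since for $N\ge2$ one genuinely needs $H^m\hookrightarrow L^\infty$ to make the fixed-point argument work, and this is what the cited reference actually does.
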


\begin{proof}
Most of the proof is almost verbatim to the one in of Theorem 4.1 in \cite{berendsen2017cross}. In fact, the only differences are the different signs of the non-local interaction terms and the modified mobility. However, since the interaction terms only need to be bounded in the appropriate spaces and the modification to the mobility is strictly positive on $\M$, these changes do not affect the proof. For completeness, we sketch the procedure: Using the definition of the entropy variables in \eqref{eq:entropyvariables}, we can rewrite equation \eqref{eq:alternative} as
\begin{align*}
\partial_t r &= \nabla \cdot [ D_r(K_2\ast r, K_2\ast b)(r(1-\rho)\nabla u + r(1-\rho)(-C_{rr}\nabla K_2\ast r+C_{rb}\nabla K_2\ast b))] \\
\partial_t b &= \nabla \cdot [ D_b(K_2\ast r, K_2\ast b)(b(1-\rho)\nabla v + b(1-\rho)(C_{br}\nabla K_2\ast r-C_{bb}\nabla K_2\ast b))].
\end{align*}
In this formulation it becomes clear that the (respective) first terms of the right hand side will drive the dissipation of the entropy. The convection terms on the other hand can be estimated using the smoothing properties of the convolutions and yield a linear growth term in the entropy. Indeed, a formal calculation shows that 
\begin{align*}
E(r,b) + &\frac{1}{4} \int_0^T\int_{{\mathbb{T}^N}} (1-\rho)|\nabla\sqrt{r}|^2 + (1-\rho)|\nabla\sqrt{b}|^2 + |\nabla\sqrt{1-\rho}|^2 + 2|\nabla \rho|^2 \,dx dt  \nonumber \\
&\quad \leq E(r_I,\, b_I) + CT.
\end{align*}
To use this a-priori estimate in a rigorous way, the system is approximated by an implicit in time discretization and subsequentially regularised. In particular we denote by  $\tau > 0$  the discrete time step, and consider the following time discrete problem

\begin{align*} 
  \frac{1}{\tau}
  \begin{pmatrix}
r_{k-1}-r_k\\
 b_{k-1}-b_k
\end{pmatrix}
  &=
\begin{pmatrix}
    \nabla \cdot [ D_r(r_k(1-\rho_k)\nabla u_k + r_k(1-\rho_k)(-C_{rr}\nabla K_2\ast r_k+C_{rb}\nabla K_2\ast b_k )) ] \\
\nabla \cdot [ D_b(b_k(1-\rho_k)\nabla v_k + b_k(1-\rho_k)(C_{br}\nabla K_2\ast r_k-C_{bb}\nabla K_2\ast b_k )) ]
\end{pmatrix} \\
&+\tau
\begin{pmatrix}
\Delta u_{k}-u_k\\
 \Delta v_k-v_k\\
\end{pmatrix}.
\end{align*}
This system is still nonlinear and existence of the discrete iterates is established by a fixed point argument. Finally, the (time discrete analogue) of the dissipation of the entropy functional \eqref{eq:entropy} yields a-priori bounds which are sufficient to pass to the limit $\tau\to 0$ and obtain existence of a weak solution to \eqref{eq:alternative}. 
\end{proof}

\subsection{Non-local jumps and local sensing}
Next we discuss global in time existence of the nonlinear integro differential equation \eqref{systemnonlocal4}. Local in time existence follows from Picard Lindel\"of, which can be extended to all times $T > 0$.
\begin{lemma}[Local existence]\label{lem:local} For every $(r_I,b_I) \in \overline{\mathcal{M}}$, $D_{r,b}$ given by \eqref{diff_coeff} and admissible $K_1$, there exists a positive $T>0$ and functions 
$$
(r,b)\in [C^1((0,T],L^\infty({\mathbb{T}^N}))]^2,
$$

which are unique solutions to \eqref{systemnonlocal4}.
\end{lemma}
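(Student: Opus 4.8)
The plan is to set this up as a fixed-point problem in the Banach space $X = [C([0,T],L^\infty(\mathbb{T}^N))]^2$ and apply the Banach fixed-point theorem (the Picard--Lindel\"of argument in Banach spaces alluded to in the text). First I would write the right-hand side of \eqref{systemnonlocal4} as $\partial_t(r,b) = F(r,b)$, where for the first component
\[
F_1(r,b)(x) = \int_{\mathbb{T}^N} K_1(x-x')\bigl[(1-\rho(x))D_r(r',b')r' - (1-\rho(x'))D_r(r,b)r\bigr]\,dx',
\]
and analogously for $F_2$; here $D_r,D_b$ are the explicit exponentials in \eqref{diff_coeff}. The key structural observation is that, on the (bounded, convex) set $\overline{\mathcal M}$, the quantities $\rho = r+b$, $1-\rho$, and the exponential coefficients $D_r(r,b), D_b(r,b)$ are all uniformly bounded in $L^\infty$ (since $0\le r,b$ and $\rho\le 1$ force $0\le r,b\le 1$, on which $D_{r,b}$ are continuous hence bounded), and moreover they are Lipschitz as maps from $L^\infty$ to $L^\infty$: the exponential is locally Lipschitz and its argument is bounded, and products of bounded Lipschitz functions are Lipschitz. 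Combined with $K_1\in W^{1,1}$ (in particular $K_1\in L^1$), Young's inequality $\|K_1 * g\|_\infty \le \|K_1\|_{L^1}\|g\|_\infty$ shows $F$ maps bounded subsets of $[L^\infty]^2$ into $[L^\infty]^2$ and is Lipschitz there with a constant depending only on the $L^\infty$ bounds and $\|K_1\|_{L^1}$.

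Next I would localise: fix $R$ large enough that the closed ball $B_R$ in $[C([0,T],L^\infty)]^2$ contains the constant path $(r_I,b_I)$ with room to spare, and define the integral operator
\[
\Phi(r,b)(t) = (r_I,b_I) + \int_0^t F(r(s),b(s))\,ds .
\]
On the time interval $[0,T]$ with $T$ small, the bounds above give $\|\Phi(r,b)(t) - (r_I,b_I)\|_\infty \le T\,M$ where $M = \sup_{B_R}\|F\|$, so $\Phi$ maps $B_R$ into itself once $T M$ is small; likewise $\|\Phi(r_1,b_1) - \Phi(r_2,b_2)\|_{C([0,T],L^\infty)} \le T L\,\|(r_1,b_1)-(r_2,b_2)\|_{C([0,T],L^\infty)}$ with $L$ the Lipschitz constant of $F$ on $B_R$, so $\Phi$ is a contraction once $TL<1$. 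Banach's theorem then yields a unique fixed point, which is the unique mild solution in $B_R$; the $C^1$-in-time regularity on $(0,T]$ follows a posteriori because $t\mapsto F(r(t),b(t))$ is continuous into $L^\infty$ (using continuity of $r,b$ in $t$ and Lipschitz continuity of $F$), so the integral equation upgrades to the differential equation $\partial_t(r,b)=F(r,b)$ classically in $L^\infty$.

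The remaining point — and the one I expect to require the most care — is to verify that the constructed solution actually stays in $\overline{\mathcal M}$, i.e. that $r,b\ge 0$ and $\rho\le 1$ are preserved, since the fixed-point argument by itself only produces a solution in $B_R\subset[L^\infty]^2$ and the Lipschitz/boundedness estimates were only claimed on $\overline{\mathcal M}$. Here one should exploit the gain/loss structure of \eqref{systemnonlocal4}: at a point where $r$ is small the loss term $-(1-\rho')D_r r$ is correspondingly small while the gain term $(1-\rho)D_r'r'\ge 0$, which formally gives a Gr\"onwall/comparison bound keeping $r$ (and $b$) nonnegative; similarly, writing the equation for $\rho = r+b$ and noting that the net ``source'' vanishes when $\rho = 1$ pointwise, one obtains $\rho\le 1$. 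Since a clean maximum principle is not directly available for the nonlocal operator, the honest route is probably to build the invariance of $\overline{\mathcal M}$ into the iteration itself — e.g. truncate $D_{r,b}$ outside $[0,1]^2$ so that $F$ is globally Lipschitz on all of $[L^\infty]^2$, run the fixed-point argument there, prove a priori that the resulting solution satisfies $0\le r,b$ and $\rho\le 1$ (so the truncation is inactive), and conclude. Once local existence with values in $\overline{\mathcal M}$ is established, the uniform $L^\infty$ bound (coming from $\overline{\mathcal M}$ itself) makes $M$ and $L$ above independent of the starting time, so the standard continuation argument extends the solution to every $T>0$, which is the statement announced after the lemma.
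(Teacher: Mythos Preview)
Your proof of the lemma proper---Lipschitz estimate on the right-hand side via boundedness of $D_{r,b}$ plus Young's inequality, followed by Picard--Lindel\"of in $L^\infty$---is essentially the paper's argument; they simply cite Deimling's abstract version rather than writing out the contraction mapping. One small point: your worry that the Lipschitz bound is only available on $\overline{\mathcal M}$ is overstated, since the $D_{r,b}$ in \eqref{diff_coeff} are globally smooth exponentials, so $F$ is locally Lipschitz on all of $[L^\infty]^2$ and no truncation is needed just to run the fixed-point argument near $(r_I,b_I)$.

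More substantively, note that the lemma as stated does \emph{not} assert that the solution stays in $\overline{\mathcal M}$; your last two paragraphs therefore go beyond what is being proved here and anticipate the subsequent theorem. The paper separates the two steps, and for invariance it uses a different device than the truncation-plus-comparison route you sketch: it checks the tangency condition $F(r,b)\cdot\nu \le 0$ for every vector $\nu$ in the normal cone of $\overline{\mathcal M}$ at each boundary point, and then invokes Deimling's flow-invariance theorem for closed sets. Your approach would also work, but the normal-cone argument is cleaner here: it reads off the three boundary pieces ($r=0$, $b=0$, $r+b=1$) directly from the sign structure of $F$ without having to verify a posteriori that a truncation is inactive. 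Once invariance is in hand, both you and the paper obtain global existence the same way, via the uniform Lipschitz constant on $\overline{\mathcal M}$.
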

\begin{proof}
Taking $(r_1,b_1)$ and $(r_2,b_2)$ in $\overline{\mathcal{M}}$ we can estimate the right hand side of the first equation in \eqref{systemnonlocal4} as follows
\begin{align}\label{lipschitz}\nonumber
&\|(1-\rho_1)K_1\ast (D_r(r_1,b_1)r_1) - D_r(r_1,b_1)r_1 K_1\ast(1-\rho_1)\\
\quad& - (1-\rho_2)K_1\ast (D_r(r_2,b_2)r_2) - D_r(r_2,b_2)r_2 K_1\ast(1-\rho_2)\|_{L^\infty({\mathbb{T}^N})}\\\nonumber
\qquad &\le \| K_1 \ast (D_r(r_1,b_1)r_1-D_r(r_2,b_2)r_2) -  (D_r(r_1,b_1)r_1-D_r(r_2,b_2)r_2)K_1\ast 1\|_{L^\infty({\mathbb{T}^N})}\\\nonumber
\qquad & \le C \|r_1-r_2\|_{L^\infty({\mathbb{T}^N})},
\end{align}
where we have used Young's inequality for convolutions and the constant only depends on the integral of $K_1$ as well as on
$$
\sup_{(r,b)\in \overline{\mathcal{M}}} D_r(r,b),\text{ and } \sup_{(r,b)\in \overline{\mathcal{M}}} D_b(r,b).
$$
Performing the same estimate on the second equation in \eqref{systemnonlocal4}, we conclude that the right hand side is Lipschitz continuous in $\overline{\mathcal {M}} \subset [L^\infty({\mathbb{T}^N})]^2$. Thus applying a version of Picard-Lindel\"of in this Banach space, \cite[Thm 3.2]{Deimling1977}, concludes the proof.
\end{proof}
We proceed by showing that the local solution obtained in the previous lemma remains in $\overline{\mathcal{M}}$ for all times. To this end, we define
\begin{align}\label{eq:defF}
F(r,b) := \binom{(1-\rho) K_1*(D_r r)-D_r(r, b) rK_1*(1-\rho)}{(1-\rho)   K_1*(D_b b) - D_b(r, b) b K_1*(1-\rho)}.
\end{align}
Following \cite[Thm 5.1]{Deimling1977}, we have to show that 
\begin{align}\label{eq:condinv}
F(r,b) \cdot \nu \le 0,
\end{align}
for all $(r,b)\in \partial\mathcal{M}$ and all vectors $\nu\in \mathcal{N}(r,b)$ where $\mathcal{N}(r,b)$ denotes the normal cone at the point $(r,b)$ defined as
\begin{align}\label{eq:normalcone}
\mathcal{N}(r,b) = \left\{ u \in L^2({\mathbb{T}^N})^2\; \left|\; \sup \left\langle \mathcal{M}-\binom{r}{b},u\right\rangle \le 0\right\}\right..
\end{align}
First we note that $ \overline{\mathcal{M}}$ has empty interior and thus every element of $ \overline{\mathcal{M}}$ is an element of its boundary. First we consider all functions $(r,b)\in  \overline{\mathcal{M}}$ for which there exists an $\eps > 0$ such that $0< \eps \le r,b$ and $r+b \le 1- \eps < 1$. In this case, the normal only contains the vector $(0,0)$ and \eqref{eq:condinv} is trivially satisfied. The remaining parts of $\partial\mathcal{M}$ are of the form that, for a given set $A$ with positive Lebesgue measure, either 
\begin{align}\label{eq:charbdryM}
r = 0, \; b = 0 \text{ or } r+b = 1\text{ for a.e.  }x \in A.
\end{align}
In the first case, it is easy to check that all elements of the normal cone are of the form
$$
\nu_1 = \binom{-c}{0}\text{ on $A$ and zero otherwise,}
$$
for arbitrary constants $c>0$. Using $r=0$ in \eqref{eq:defF}, we obtain
$$
F(r,b) \cdot \nu_1 = 0,
$$
so that \eqref{eq:condinv} is also fulfilled. The same reasoning applies for the remaining cases in \eqref{eq:charbdryM} where we obtain the vectors
$$
\nu_2 = \binom{0}{-c}\text{ and } \nu_3 = \binom{c}{c}\text{ on }A\text{ and zero otherwise.}
$$
All remaining point in $\partial\mathcal{M}$ are combinations of the cases given in \eqref{eq:charbdryM} and thus we conclude that condition \eqref{eq:condinv} holds for all vectors in the normal cone and for each point in $\partial\mathcal{M}$. Then, \cite[Thm 5.1]{Deimling1977} ensures that, for every $(r_0,b_0) \in  \overline{\mathcal{M}}$, the corresponding solutions $(r,b)$ to \eqref{systemnonlocal4} remain in $ \overline{\mathcal{M}}$. In particular, this implies that the Lipschitz estimate \eqref{lipschitz} (which relies on the fact that $(r,b)\in\overline{\mathcal{M}}$) holds uniformly and thus \cite[Thm 3.4]{Deimling1977} yields
\begin{thm} For every $(r_I,b_I) \in  \overline{\mathcal{M}}$, $D_{r,b}$ given by \eqref{diff_coeff} and $K_1$ admissible, the unique local solutions to \eqref{systemnonlocal4} satisfy $(r,b) \in  \overline{\mathcal{M}}$ and furthermore exist for arbitrary times $T>0$.
\end{thm}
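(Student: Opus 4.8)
The plan is to obtain the statement as the conclusion of the three ingredients already in place. By Lemma~\ref{lem:local}, for any $(r_I,b_I)\in\overline{\mathcal M}$ there is a unique maximal solution $(r,b)\in[C^1((0,T_{\max}],L^\infty(\mathbb{T}^N))]^2$ of \eqref{systemnonlocal4}, produced by Picard--Lindel\"of in the Banach space $[L^\infty(\mathbb{T}^N)]^2$ from the Lipschitz estimate \eqref{lipschitz} for the right-hand side $F$ in \eqref{eq:defF}. What remains is (i) to show that this trajectory never leaves $\overline{\mathcal M}$, and (ii) to deduce $T_{\max}=\infty$.

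For (i) I would apply the flow-invariance criterion \cite[Thm~5.1]{Deimling1977}: the closed set $\overline{\mathcal M}$ is invariant under the semiflow of \eqref{systemnonlocal4} provided the subtangential condition \eqref{eq:condinv}, namely $F(r,b)\cdot\nu\le 0$, holds for every $(r,b)\in\partial\mathcal M$ and every $\nu$ in the normal cone \eqref{eq:normalcone}. Since $\overline{\mathcal M}\subset[L^2(\mathbb{T}^N)]^2$ is closed, convex, and has empty $L^2$-interior (an $L^2$-small perturbation supported on a set of small measure can be made large in $L^\infty$ there and thus violate $r\ge 0$ or $r+b\le 1$), one has $\partial\mathcal M=\overline{\mathcal M}$, and $\mathcal N(r,b)$ is nontrivial precisely when one of the pointwise constraints $r=0$, $b=0$, $r+b=1$ of \eqref{eq:charbdryM} is active on a set $A$ of positive measure; there the extreme rays are $\nu_1=(-c,0)$, $\nu_2=(0,-c)$, $\nu_3=(c,c)$ supported on $A$ with $c>0$, together with their conic combinations. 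One then checks \eqref{eq:condinv} directly from \eqref{eq:defF}: on $\{r=0\}$ the first component of $F$ reduces to $(1-\rho)\,K_1*(D_r r)\ge 0$ by positivity of $K_1$, $D_r$, $r$ and $1-\rho$ on $\overline{\mathcal M}$, hence $F\cdot\nu_1\le 0$; the case $\{b=0\}$ is symmetric with $\nu_2$; and on $\{r+b=1\}$ one has $1-\rho=0$ on $A$, so both components of $F$ collapse to $-D_c c\,K_1*(1-\rho)\le 0$, giving $F\cdot\nu_3\le 0$. Linearity of $\nu\mapsto F(r,b)\cdot\nu$ covers the conic combinations. This is exactly the verification performed just before the theorem, and \cite[Thm~5.1]{Deimling1977} then yields invariance of $\overline{\mathcal M}$.

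Step (ii) is then routine: knowing $(r,b)(t)\in\overline{\mathcal M}$ for all $t<T_{\max}$, the constant $C$ in \eqref{lipschitz} --- which depends only on $\int_{\mathbb{T}^N}K_1$ and on $\sup_{\overline{\mathcal M}}D_r$, $\sup_{\overline{\mathcal M}}D_b$ --- is uniform along the trajectory, so no finite-time blow-up can occur and the continuation alternative \cite[Thm~3.4]{Deimling1977} gives $T_{\max}=\infty$. I expect the only genuinely delicate point to be the invariance step (i): one must compute the normal cone \eqref{eq:normalcone} correctly in the $L^2$ setting, handle the simultaneous activation of several of the constraints in \eqref{eq:charbdryM}, and make sure that $F$ --- a priori defined and Lipschitz only on $\overline{\mathcal M}$ --- meets the hypotheses of \cite[Thm~5.1]{Deimling1977}, if necessary by extending it to a neighbourhood. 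The passage from invariance to global existence itself is standard.
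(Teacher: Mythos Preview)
Your proposal is correct and follows essentially the same route as the paper: local existence via Picard--Lindel\"of (Lemma~\ref{lem:local}), invariance of $\overline{\mathcal M}$ via the normal-cone condition \eqref{eq:condinv} checked on the three faces \eqref{eq:charbdryM} and applied through \cite[Thm~5.1]{Deimling1977}, and then continuation to arbitrary times via the uniform Lipschitz bound \eqref{lipschitz} and \cite[Thm~3.4]{Deimling1977}. Your treatment of the face $\{r=0\}$ is in fact slightly more careful than the paper's: since $r$ need only vanish on $A$, the convolution $K_1*(D_r r)$ is in general strictly positive there, so one gets $F\cdot\nu_1\le 0$ rather than equality, which is all that is needed.
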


\section{Linear stability and nontrivial steady states}\label{sec:stationarystates}

In this section we discuss stationary states and their linear stability, since we expect the formation of aggregates and clusters within the respective groups. Both systems - the one with local jumps and non-local sensing as well as the one with non-local jumps and local sensing - have constant stationary states (given by \eqref{e:stationary}).\\
We start by analysing the linear stability of these constant stationary states and identify conditions for the function $D_{r,b}$, which ensure stability.  Next we are interested in non-trivial stationary states. In the case of a single species, we are able to characterise and approximate non-trivial stationary states in 1D. We will see that the value of the approximate solutions changes quickly from $\underline{r}$ to $\overline{r}$ in certain parameter regimes, where $0<\underline{r}<\overline{r}<1$ will be specified below. All results will be illustrated by numerical experiments in Section \ref{sec:numerics}.

\subsection{Linearized stability analysis of constant steady states}
%
In this section we perform a linear stability analysis of the constant density states of the nonlinear PDE system \eqref{system1} as well as the integro differential system \eqref{systemnonlocal4}. We recall that both systems have constant stationary states given by \eqref{e:stationary} on $\mathbb{T}^N$.
\subsubsection{Local jumps and non-local sensing}\label{lin_stab_mf}
%
We start by studying the nonlinear PDE system \eqref{system1}. First we consider the single species model, which is obtained by setting $b \equiv 0$ in \eqref{system1}, i.e.
\begin{align*}
\partial_t r &=  \nabla \cdot [(1-r)\nabla (D_r(K_2\ast r) r)+r D_r(K_2\ast r) \nabla r],
\end{align*}
and then generalise our computations to two species.
In the following we assume that $K_2$ is an admissible kernel in terms of Definition \ref{def:admissible} and $\int_{\mathbb{R}^N} K_2(x)\, dx=1$.
%
\paragraph*{Single species model}
We consider a perturbation around the constant stationary state $r_0$, given by \eqref{e:stationary}, that is $r(x)=r_0+\epsilon \tilde{r}(x)$, where  $\int_{\mathbb{T}^N} \tilde{r}~ dx=0$. 
Using $D_r(p)=C_r e^{-C_{rr} p}$ and the fact that the mass of $K_2$ is normalised to $1$, we have $K_2\ast r=K_2\ast (r_0+\epsilon \tilde{r})=r_0+\epsilon K_2\ast \tilde{r}$.
This implies
\begin{align*}
D_r(K_2\ast r)&=D_r(K_2\ast (r_0+\epsilon \tilde{r}))=D_r(r_0)-\epsilon\, C_{rr} D_r( r_0) K_2\ast \tilde{r}+\mathcal{O}(\epsilon^2).
\end{align*}
This gives us the following linearized equation (up to order $\epsilon$)
\begin{align}\label{eq:fourier}
\partial_t \tilde{r}&=D_r(r_0) \Delta \tilde{r}-(1-r_0)r_0 C_{rr}D_r( r_0)\Delta (K_2 \ast \tilde{r}).
\end{align}
Then, a Fourier transform yields
\begin{align*}
\partial_t \hat{r}&=-|\xi|^2 \left(D_r(r_0) -(1-r_0)r_0 C_{rr}D_r( r_0) \hat{K_2}(\xi)\right)\hat{r},
\end{align*}
where $\hat{r}=\hat{r}(\xi,t)$ denotes the Fourier transform of $\tilde{r}$. Note that as $K_2$ is even, $\hat{K_2}$ is a real function. Hence only wavenumbers $\xi$, for which 
\begin{align}
\hat{K}_2(\xi)& < \frac{1}{(1-r_0)r_0C_{rr}}\geq 0,
\end{align}
where $0<r_0 <1$, are expected to be stable. Since $K_2\in L^1(\mathbb{R}^N)$, we have that $\hat{K}_2$ is a continuous function vanishing at infinity, i.e. $\hat{K}_2\in C_0(\mathbb{R}^N)$. Furthermore, thanks to the assumptions on admissible kernels, cf. Definition \ref{def:admissible}, $\hat{K}_2$ is a positive, radially symmetric, decreasing function. Hence all wavenumbers $\xi$ with $|\xi|$ larger than some certain threshold are stable.

\paragraph*{Full two species model}
In the case of two species we rewrite system \eqref{system1} as
\begin{align}\label{system2}
\begin{aligned}
\begin{split}
\pa_t r&=\nabla \cdot [(1-b) D_r(K_2\ast r, K_2\ast b)\nabla r+(1-\rho) r \nabla D_r(K_2\ast r,K_2\ast b)\\
&\phantom{= \nabla \cdot}+r D_r(K_2\ast r,K_2\ast b)\nabla b]
\end{split}\\
\begin{split}
\pa_t b &= \nabla \cdot [(1-r) D_b( K_2\ast r, K_2\ast b)\nabla b+(1-\rho) b \nabla D_b(K_2\ast r,K_2\ast b)\\
&\phantom{ = \nabla \cdot} +b D_b(K_2\ast r,K_2\ast b)\nabla r].
\end{split}
\end{aligned}
\end{align}
Once again we use the perturbation ansatz $r(x)=r_0+\epsilon \tilde{r}(x)$ and $b(x)=b_0+\epsilon\tilde{b}(x)$, where $r_0$ and $b_0$ are the constant stationary states defined in \eqref{e:stationary} and $\int_{\mathbb{T}^N} \tilde{r}~ dx=\int_{\mathbb{T}^N} \tilde{b}~dx=0$. \\
We recall that the diffusivities take the form
\begin{align*}
D_r(K_2\ast r, K_2\ast b)=C_r e^{-C_{rr} K_2\ast r+C_{rb} K_2\ast b} \text{ and }D_b(K_2\ast r,K_2\ast b)=C_be^{C_{br} K_2\ast r-C_{bb}K_2\ast b}.
\end{align*}
Then the linearized system reads as
\begin{align}\label{system3}
\begin{aligned}
\pa_t \tilde{r}&=(1-b_0) D_r(r_0,b_0)\Delta \tilde{r}-C_{rr}(1-\rho_0) r_0 D_r( r_0, b_0) \Delta (K_2\ast \tilde{r})\\
&\quad+C_{rb}(1-\rho_0) r_0 D_r( r_0, b_0) \Delta (K_2\ast \tilde{b}))+r_0 D_r( r_0, b_0)\Delta \tilde{b}\\
\pa_t \tilde{b}&=(1-r_0) D_b( r_0,b_0)\Delta \tilde{b}-C_{bb}(1-\rho_0) b_0 D_b( r_0,b_0) \Delta (K_2\ast \tilde{b})\\
&\quad+C_{br}(1-\rho_0) b_0 D_b(r_0, b_0) \Delta (K_2\ast \tilde{r}))+b_0 D_b(r_0, b_0)\Delta \tilde{r}.\\
\end{aligned}
\end{align}
Fourier transform gives 
\begin{align}\label{system4}
\begin{aligned}\begin{pmatrix}
\partial_t \hat{r} \\ 
\partial_t \hat{b}
\end{pmatrix} &=\underbrace{\begin{pmatrix}
L_{11}&L_{12}\\
L_{21} & L_{22}
\end{pmatrix}}_{=:A}
\begin{pmatrix}
\hat{r} \\ 
\hat{b}
\end{pmatrix}
\end{aligned}
\end{align}
where 
\begin{align*}
&L_{11}=-|\xi|^2 D_r( r_0, b_0)[(1-b_0)-C_{rr}(1-\rho_0) r_0 \hat K_2(\xi)],\\
&L_{12}=-|\xi|^2 D_r( r_0, b_0)[r_0 +C_{rb}(1-\rho_0) r_0 \hat K_2(\xi)],\\
&L_{21}=-|\xi|^2 D_b( r_0, b_0)[b_0 +C_{br}(1-\rho_0) b_0  \hat K_2(\xi)], \\
&L_{22}=-|\xi|^2 D_b( r_0, b_0)[(1-r_0) -C_{bb}(1-\rho_0) b_0  \hat{K}_2(\xi)].
\end{align*}
The linearized system is linearly stable, if all eigenvalues of the matrix $A$ have strictly negative real values. In order to understand the stability or instability depending on the preferences, let us consider some asymptotic cases. First of all, for $C_{ij}$ small (i.e. small preference) we observe that the constant stationary state is linearly stable, since the eigenvalues  of $A$ are a perturbation of 
\begin{multline*}
\lambda_{1,2}=-\frac{|\xi|^2}{2}\bigg(D_r(1-b_0)+D_b(1-r_0) \\
   \pm \sqrt{[( D_r(1-b_0) + D_b (1-r_0))^2 - 4 (D_b D_r (1-b_0-r_0))}\bigg)
\end{multline*}
   in the case $C_{ij}=0$. Hence, the mean-field model does not lead to segregation for arbitrarily small preference, but a certain threshold is needed. On the other hand for large $C_{ij}$ the constant stationary state becomes unstable. The easiest case to see this is 
$C_{ij} = C$ and $C \rightarrow \infty$. In this case the eigenvalues are a small perturbation of $\lambda_1=0$ and $\lambda_2=|\xi|^2\hat{K}_2(\xi)(1-\rho_0)(D_r r_0+D_b b_0)$. Note that we assumed $K_2$ to be positive, radially symmetric and non-increasing implying $\hat{K}_2(\xi) > 0$ for all $\xi\in \mathbb{R}^N$.

\subsubsection{Non-local jumps and local sensing}\label{lin_stab_int} 

The linear stability analysis for the integro differential equation is similar to the one of the mean field PDE system. In the following we assume that $K_1$ is an admissible kernel in the sense of Definition \ref{def:admissible} and $\int_{\mathbb{R}^N} K_1(x)\, dx=M_{K_1}$.\\

We use the same perturbation ansatz as in the previous subsections, namely $r(x)=r_0+\epsilon\tilde{r}(x)$ and $b(x)=b_0+\epsilon\tilde{b}(x)$ with $\int_{\mathbb{T}^N} \tilde{r}\, dx=\int_{\mathbb{T}^N} \tilde{b}\, dx =0$. This gives the following linearized system of equations
\begin{align*}
\partial_t \tilde{r} &= (1-\rho_0)K_1*[(\partial_p D_r(r_0,b_0)\tilde{r}+\partial_q D_r(r_0,b_0) \tilde{b})r_0+D_r(r_0,b_0)\tilde{r}]-\tilde{\rho}K_1*(D_r(r_0,b_0)r_0)\\
&\quad+D_r(r_0,b_0)r_0K_1*\tilde{\rho}-(\partial_p D_r(r_0,b_0)\tilde{r}+\partial_q D_r(r_0,b_0)\tilde{b})r_0K_1* (1-\rho_0)\\
&\quad-D_r(r_0,b_0)\tilde{r}K_1*(1-\rho_0)\\
\partial_t \tilde{b} &=(1-\rho_0)K_1*[(\partial_p D_b(r_0,b_0)\tilde{b}+\partial_q D_b(r_0,b_0) \tilde{b})b_0+D_b(r_0,b_0)\tilde{b}]-\tilde{\rho}K_1*(D_b(r_0,b_0)b_0)\\
&\quad+D_b(r_0,b_0)b_0K_1*\tilde{\rho}-(\partial_p D_b(r_0,b_0)\tilde{b}+\partial_q D_b(r_0,b_0)\tilde{b})b_0K_1* (1-\rho_0)\\
&\quad-D_b(r_0,b_0)\tilde{b}K_1*(1-\rho_0),
\end{align*}
which can be written as
\begin{align*}
\partial_t \tilde{r} &= (1-\rho_0)r_0(\partial_p D_r(r_0,b_0)K_1*\tilde{r}+\partial_q D_r(r_0,b_0) K_1*\tilde{b})+(1-\rho_0)D_r(r_0,b_0)K_1*\tilde{r}\\
&\quad-M_{K_1}\tilde{\rho}D_r(r_0,b_0)r_0 +D_r(r_0,b_0)r_0 K_1*\tilde{\rho}-M_{K_1}(\partial_p D_r(r_0,b_0)\tilde{r}+\partial_q D_r(r_0,b_0)\tilde{b})r_0 (1-\rho_0)\\
&\quad-M_{K_1}D_r(r_0,b_0)\tilde{r}(1-\rho_0)\\
\partial_t \tilde{b} &=(1-\rho_0)b_0(\partial_p D_b(r_0,b_0)K_1*\tilde{r}+\partial_q D_b(r_0,b_0) K_1*\tilde{b})+(1-\rho_0)D_b(r_0,b_0)K_1*\tilde{b}\\
&\quad-M_{K_1}\tilde{\rho}D_b(r_0,b_0)b_0 +D_b(r_0,b_0)b_0 K_1*\tilde{\rho}-M_{K_1}(\partial_p D_b(r_0,b_0)\tilde{r}+\partial_q D_b(r_0,b_0)\tilde{b})r_0 (1-\rho_0)\\
&\quad-M_{K_1}D_b(r_0,b_0)\tilde{b}(1-\rho_0).
\end{align*}
Using Fourier transform, we obtain a linear system of the form
\begin{align*}
\partial_t \hat{r} &=[(1-\rho_0)r_0 \partial_p D_r(r_0,b_0)+(1-b_0)D_r(r_0,b_0)](\hat{K}_1-M_{K_1})\hat{r}\\
&\quad +[(1-\rho_0)r_0 \partial_q D_r(r_0,b_0)+D_r(r_0,b_0)r_0](\hat{K}_1-M_{K_1})\hat{b}\\
\partial_t \tilde{b} &=[(1-\rho_0)b_0 \partial_p D_b(r_0,b_0)+(1-r_0)D_b(r_0,b_0)](\hat{K}_1-M_{K_1})\hat{b}\\
&\quad +[(1-\rho_0)b_0 \partial_q D_b(r_0,b_0)+D_b(r_0,b_0)b_0](\hat{K}_1-M_{K_1})\hat{r},\\
\end{align*}
which reads in a more compact form as
\begin{align}\label{system6}
\begin{aligned}\begin{pmatrix}
\partial_t \hat{r} \\ 
\partial_t \hat{b}
\end{pmatrix}
&=\underbrace{\begin{pmatrix}
L_{11}&L_{12}\\
L_{21} & L_{22}
\end{pmatrix}}_{=:A}
\begin{pmatrix}
\hat{r} \\ 
\hat{b}
\end{pmatrix}
\end{aligned}
\end{align}
where 
\begin{align*}
&L_{11}=D_r( r_0, b_0)(\hat{K}_1-M_{K_1})[(1-b_0)-C_{rr}(1-\rho_0)r_0],\\
&L_{12}=D_r( r_0, b_0)(\hat{K}_1-M_{K_1})[r_0+C_{rb}(1-\rho_0)r_0],\\
&L_{21}=D_b( r_0, b_0)(\hat{K}_1-M_{K_1})[b_0+C_{br}(1-\rho_0)b_0],\\
&L_{22}=D_b( r_0, b_0)(\hat{K}_1-M_{K_1})[(1-r_0)-C_{bb}(1-\rho_0)b_0].
\end{align*}

Analogously to Section \ref{lin_stab_mf}, we investivate the impact of small and large preferences. Note that as we assumed $K_1$ to be admissible in the sense of Definition \ref{def:admissible}, we can deduce that $\hat{K}_1(\xi)-M_{K_1}\leq 0$ for all $\xi \in \mathbb{R}^N$ and $\hat{K}_1(\xi)=M_{K_1}$ if and only if $\xi=0$. Hence, for $\xi \neq 0$, we obtain the same eigenvalues for the limit cases $C_{ij}=0$ and $C_{i,j}\to \infty$ as in the mean-field PDE model.
However, the stability of solutions does not depend on the frequency of the perturbations (as in the mean-field PDE model) and we will observe in Section \ref{sec:numerics} that the formation
of aggregates happens at a much faster time scale. These observations are based on numerical experiments only, as we are not able to compare the dynamics of the
linearized systems analytically at the moment.

\subsection{Local jumps and non-local sensing: nontrivial steady states of the single-species PDE model}
We conclude by analysing non-constant stationary states of the single species model. We recall that the single species model is obtained by setting $b \equiv 0$ in \eqref{system1}. Since the diffusivity decreases with the perceived density
we expect the formation of aggregated states, so called bumps later on.
The single species model reads as 
\begin{align}
\partial_t r &=  \nabla \cdot [(1-r)\nabla (D_r(K_2\ast r) r)+r D_r(K_2\ast r) \nabla r]\nonumber\\
&=\nabla \cdot [D_r(K_2\ast r) \nabla r +(1-r) r\nabla D_r(K_2\ast r)],\label{1D_reduction}
\end{align}
with $D_r(K_2\ast r)=C_re^{-C_{rr} K_2\ast r}$ and $K_2$ an admissible kernel specified below. 
We set the flux  $J:=D_r (K_2 \ast r)\nabla r +(1-r) r\nabla D_r$ to zero to identify possible stationary states:
\begin{align}\label{eq:ss} 
\frac{\nabla r}{(1-r)r}+\frac{\nabla D_r(K_2 \ast r)}{D_r(K_2 \ast r)}&=0,
\end{align}
where we exclude the values $r(x)=0$ and $r(x)=1$, which will be justified by the following computations. Equation \eqref{eq:ss} results in
\begin{align}\label{line4}
r &=\frac{\tilde{C}}{\tilde{C}+D_r(K_2 \ast r)},
\end{align}
where $\tilde{C}>0$ is a constant of integration. Equation \eqref{line4} is an integral equation, which we want to approximate by a differential equation in the following.

From now on we consider equation \eqref{line4} on the one dimensional torus $\mathbb{T}^1$, which can be interpreted as the interval $[0,1]$ with periodic boundary conditions. For $K_2$ we choose $K_2(x)=\frac{1}{2\epsilon}e^{-\frac{|x|}{\epsilon}}$, which can be approximated (up to exponentially small terms) on the torus by $\tilde{K}_2(x)=\frac{1}{2\epsilon}e^{-\frac{d(x)}{\epsilon}}$ with $d(x)=\min\{|x|,1-|x|\}$, i.e. $(\tilde{K}_2\ast r)(x)=\int_0^1 \tilde{K}_2(x-y)r(y)\, dy$ and, thus,
\begin{align*}
  (\tilde{K}_2\ast r)(x) &=\begin{cases}
\frac{1}{2\epsilon}\int_0^{x+\frac{1}{2}} e^{-\frac{|x-y|}{\epsilon}}r(y)\, dy + \frac{1}{2\epsilon}\int_{x+\frac{1}{2}}^1 e^{\frac{|x-y|-1}{\epsilon}}r(y)\, dy\quad \text{for } x\le \frac{1}{2} \\
\frac{1}{2\epsilon}\int_0^{x-\frac{1}{2}} e^{\frac{|x-y|-1}{\epsilon}}r(y)\, dy + \frac{1}{2\epsilon}\int_{x-\frac{1}{2}}^1 e^{-\frac{|x-y|}{\epsilon}}r(y)\, dy\quad \text{for } x>\frac{1}{2} .
\end{cases}
\end{align*}
In the following we assume that  $\epsilon\ll 1$, hence the diffusivity depends locally on the density. For $x<\frac{1}{2}$ we introduce the new variables $y=x+\epsilon z$ and $y=x+1+\epsilon z$ in the first and second integral, respectively. This gives 
\begin{align*}
(\tilde{K}_2\ast r)(x) &=\frac{1}{2}\int_{-\frac{x}{\epsilon}}^{\frac{1}{2\epsilon}} e^{-|z|}r(x+\epsilon z)\, dz + \frac{1}{2}\int_{-\frac{1}{2\epsilon}}^{-\frac{x}{\epsilon}} e^{z}r(x+1+\epsilon z)\, dz\\
&\sim \frac{1}{2}\int_{-\infty}^\infty  e^{-|z|} (r(x)+\epsilon r'(x) z +\frac{\epsilon^2 z^2}{2}r''(x)+\mathcal{O}(\epsilon^3))\, dz\\
&= r(x)+\epsilon^2 r''(x)+\mathcal{O}(\epsilon^3).
\end{align*}
For the case $x>\frac{1}{2}$, we also obtain in an analogous way that $\tilde{K}_2\ast r\sim r+\epsilon^2 r''+\mathcal{O}(\epsilon^3)$.
Since $\frac{rD_r(\tilde{K}_2 \ast r)}{1-r}=\tilde{C}$ with $D_r(f)=C_r e^{-C_{rr} f}$, we obtain the approximation
\begin{align}\label{ode1}
\epsilon^2 r''+g(r)&=0 \,,\qquad\mbox{with } 
g(r)=r+\frac{1}{C_{rr}}\log \left(\frac{\tilde{C}}{C_r}\left(\frac{1}{r}-1\right)\right) \,.
\end{align}
For $C_{rr}\le 4$, $g(r)$ is a decreasing function of $r$, implying
that the only periodic solutions of \eqref{ode1} are constant. We therefore
assume $C_{rr}>4$ from now on, whence $g$ is increasing between its extrema
$r_\pm =\frac{1}{2}\pm \sqrt{\frac{1}{4}-\frac{1}{C_{rr}}}$.

Since the evolution conserves the total mass $M = \int_0^1 r\,dx$, we look for solutions oscillating around this value and therefore assume $r_-<M<r_+$ and choose the constant of integration such that
$$
  g(r) = r-M + \frac{1}{C_{rr}} \log\left( \frac{M(1-r)}{r(1-M)} \right)
$$
and, thus, $g(M) = 0$, $g'(M) >0$. Note that then there exist two more zeroes $\underline r$ and $\overline r$ of $g$ with
$0<\underline r < r_-$ and $r_+ < \overline r < 1$.

With $\epsilon$ as bifurcation parameter it is a classical result, cf. \cite{guckenheimer2013nonlinear} Section 3.4,
that steady state bifurcations away from the trivial steady state
$r_0=M$ occur whenever
\begin{align}\label{per_sol2}
\frac{\sqrt{g'(M)}}{\epsilon}=2k\pi \,,\qquad k\ge 1 \,.
\end{align}
The bifurcating solutions have the approximations
$$
  r_k(x) \approx M + a\sin(2k\pi(x-x_0)) \,,
$$
with an appropriate amplitude $a$ and an arbitrary shift $x_0$
(which is due to the translation invariance of the problem).
We expect that the first bifurcation ($k=1$) is transcritical, 
i.e. an exchange of stability between $r_0$ and $r_1$, whereas the bifurcations with $k>1$ produce unstable solutions $r_k$.

Thus, for small $\epsilon$ we expect convergence to $r_1$, which
then (far from the bifurcation) has the approximate form of one plateau with sharp transitions between the values $\underline r$
and $\overline r$. This is confirmed by numerical simulations 
(see Fig. \ref{fig4}). Note that performing analogous steps in the two species model results in a system of equations of second order, which is by far not trivial to analyse and beyond the goal of this paper.

\section{Numerical examples}\label{sec:numerics}

In this section we illustrate the dynamics for both models with various numerical examples on the torus $\mathbb{T}^N$ for $N=1,2$. All simulations are based on an explicit in time stepping. The spatial derivatives are approximated by finite difference quotients, the integrals using the trapezoidal rule. All simulations were implemented and performed in Matlab.\\

In order to compare the two different schemes, the derivation in Section \ref{s:model} suggests the following choice of the kernels:
\begin{align}
K_1(x)&= \frac{1}{\epsilon^{N+2}}\tilde{K}\left( \frac{x}{\epsilon} \right)=\frac{1}{\epsilon^{N+2}}\tilde{k}\left(\left|\frac{x}{\epsilon}\right| \right),\\
K_2(x)&=\frac{1}{\epsilon^{N}}\tilde{K}\left( \frac{x}{\epsilon} \right)=\frac{1}{\epsilon^{N}}\tilde{k}\left(\left|\frac{x}{\epsilon}\right| \right),
\end{align}
with $\int_{\mathbb{R}^N} \tilde{K}(z)|z|^2\, \d z=2N$ as well as $\int_{\mathbb{R}^N} \tilde{K}(z)\, \d z=1$.\\ 
Without loss of generality, we set $\tilde{K}(x)=C_1 e^{-C_2 |x|}$ with $C_1,C_2>0$.
For this choice of the kernel, the constants $C_1$ and $C_2$ can easily be computed using the preceding assumptions and read as
\begin{align}
C_1&=\frac{C_2^N}{|\mathcal{S}^{N-1}|(N-1)!}\text{ and } C_2=\sqrt{\frac{N+1}{2}}.
\end{align}
Note that for $\epsilon \to 0$, the second moment of $K_2$ goes to zero. Hence $K_2$ converges to a Delta Dirac.
Moreover, we assume that the diffusion coefficients have the form \eqref{diff_coeff}.

\subsection{Local jumps and nonlocal sensing}

\subsubsection{One-dimensional case}

We start with 1D simulations for the mean field model. We consider one species and recall that the Fourier transform of 
$K_2(x)=\frac{1}{\epsilon}\tilde{K}\left(\frac{x}{\epsilon}\right)=\frac{C_1}{\epsilon}e^{-\frac{C_2}{\epsilon}|x|}$ is given by
\[\hat{K}_2(\xi)=\frac{2C_1 C_2}{\epsilon^2 4\pi^2\xi^2+C_2^2}.\]
As we are in one dimension, the constants are given by $C_1=\frac{1}{2}$ and $C_2=1$.
Hence, all wavenumbers $\xi$ for which the function
\begin{align}
\begin{aligned}\label{max_instab}
f(\xi)&:=-\xi^2 (1-C_{rr}(1-r_0)r_0\hat{K}_2(\xi))\\
&=-\xi^2 \left(1-C_{rr}(1-r_0)r_0\frac{1}{\epsilon^2 4\pi^2\xi^2+1}\right).
\end{aligned}
\end{align}
is positive, are unstable. Note that the dominant unstable mode has to satisfy $f'(\xi_u)=0$, if $f(\xi_u)>0$. In particular, $\xi_u$ is given by

\begin{align} \label{unstable_mode}
    \xi_u=\frac{\sqrt{\sqrt{C_{rr}(1-r_0)r_0}-1}}{2\epsilon\pi},
\end{align}

where the square root is well defined if and only if $f(\xi_u)>0$.
In the following we will discuss the linear stability of stationary states for two different parameter sets. From \eqref{max_instab} we know that if
\begin{align}\label{stability_condition}
    \hat{K}_2(\xi)=\frac{1}{\epsilon^2 4\pi^2\xi^2+1}<\frac{1}{(1-r_0)r_0C_{rr}},
\end{align}
the stationary solution $r_0$ is stable.

\begin{enumerate}[label=Ex \Roman*)]
\item $~$Let $\epsilon=0.05$, $C_{rr} = 2$ and $r_0 = 0.3$ denote the stationary state. Then inequality \eqref{stability_condition} is satisfied for all wavenumbers $\xi\in \mathbb{R}$, so $r_0$ is linearly stable.
\item $~$Let $\epsilon=0.05$ and $C_{rr} = 10$ and $r_0=0.3$. Then the stability condition \eqref{stability_condition} is satisfied for all wavenumbers $\xi$ with $\xi> \sim 3.34$. 
\end{enumerate}

The corresponding numerical simulations are shown in Figure \ref{fig:stab}. In both simulations we divide the domain in $100$ intervals and use time steps of size $\Delta t= 10^{-4}$. In the first simulation we use the parameters 
discussed in Ex I and start with a perturbation of the form
\[r=r_0+\epsilon \tilde{r}(x)=0.3+ 0.02 \sin(4\pi x).\]
Figure \ref{fig:stab1} shows that the perturbations are smoothed out and the solution goes back to the stationary solution $r_0$. 
If we increase the parameter $C_{rr}$ as in example Ex II, perturbations of the form
\[r=r_0+\epsilon \tilde{r}(x)=0.3+ 0.02 \sin(6\pi x),\]
are unstable, see Figure \ref{fig:stab2}. If we increase the frequency to
\[r=r_0+\epsilon \tilde{r}(x)=0.3+ 0.02 \sin(8\pi x),\]
the stationary states become stable again, see Figure \ref{fig:stab3}. In this example the most unstable mode is $\xi_u \approx 2.13$, cf. \eqref{unstable_mode}. We can observe this dominant mode in the numerical
simulations. Starting with a random perturbation of the form
\[r=0.3+0.01\text{rand}(0,1)\]
then the observed instabilities have period two, see Figure \ref{fig:stab4}.

\begin{figure}[!tbp]
  \begin{center}
  \subfloat[]{\includegraphics[width=0.8\textwidth]{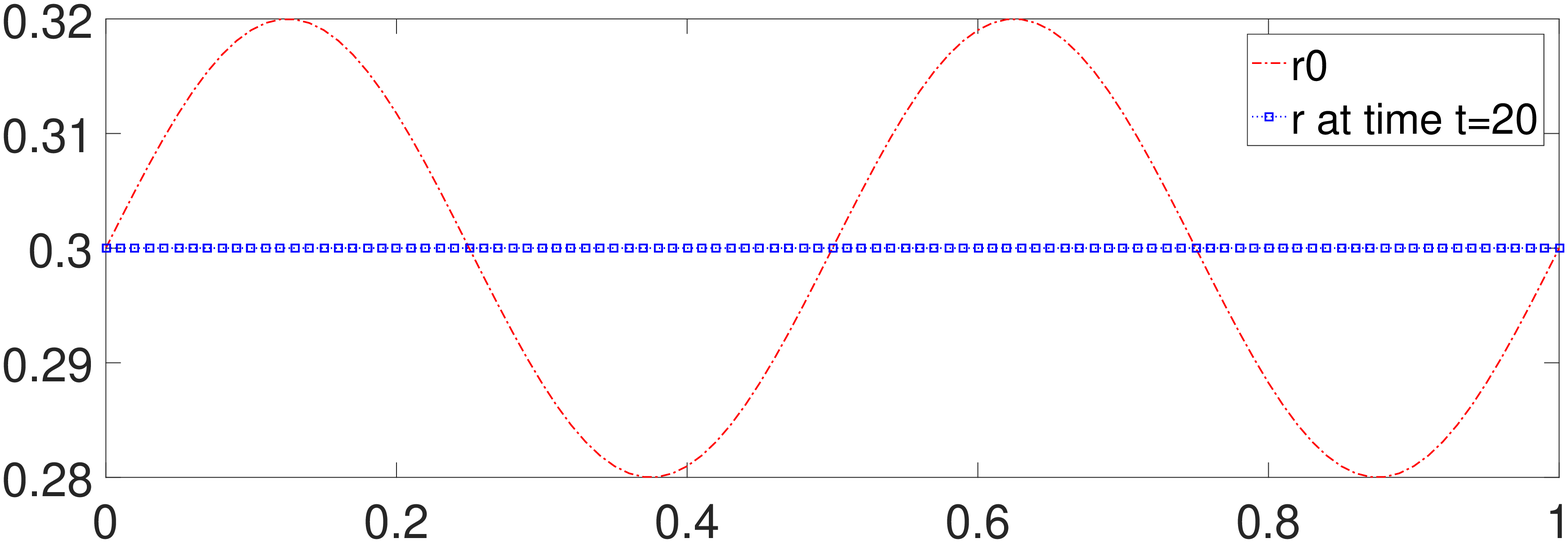}\label{fig:stab1}}\\
  \subfloat[]{\includegraphics[width=0.8\textwidth]{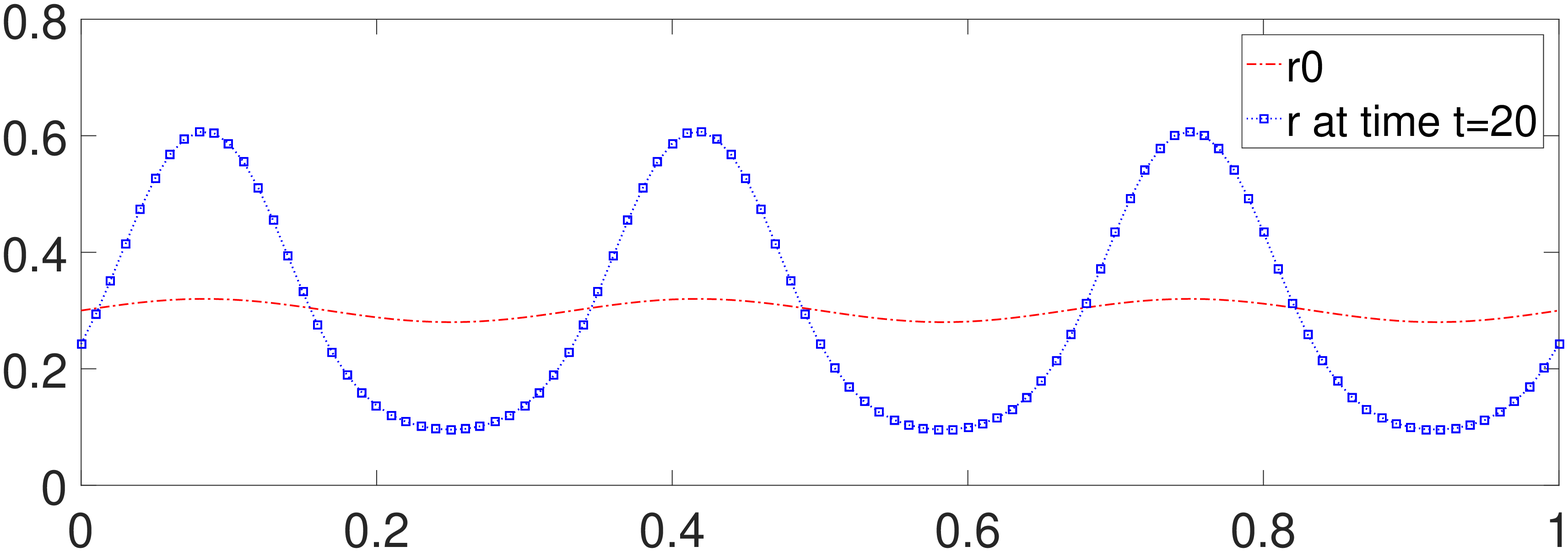}\label{fig:stab2}}\\
  \subfloat[]{\includegraphics[width=0.8\textwidth]{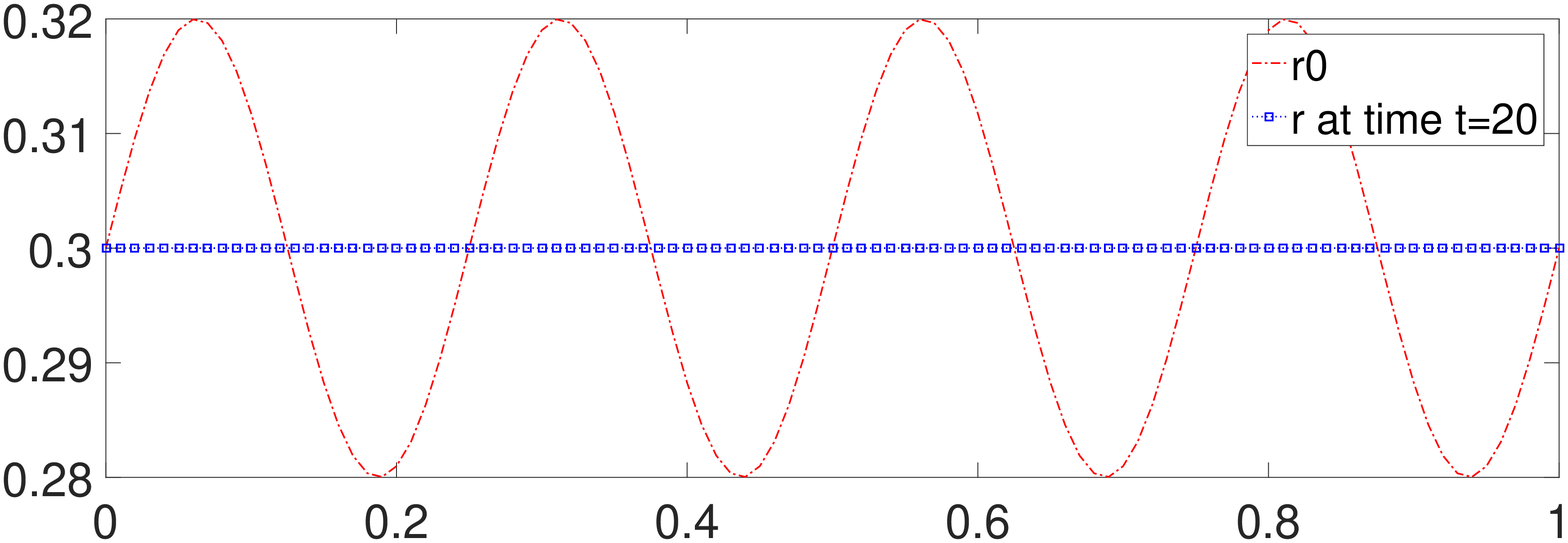}\label{fig:stab3}}\\
  \subfloat[]{\includegraphics[width=0.8\textwidth]{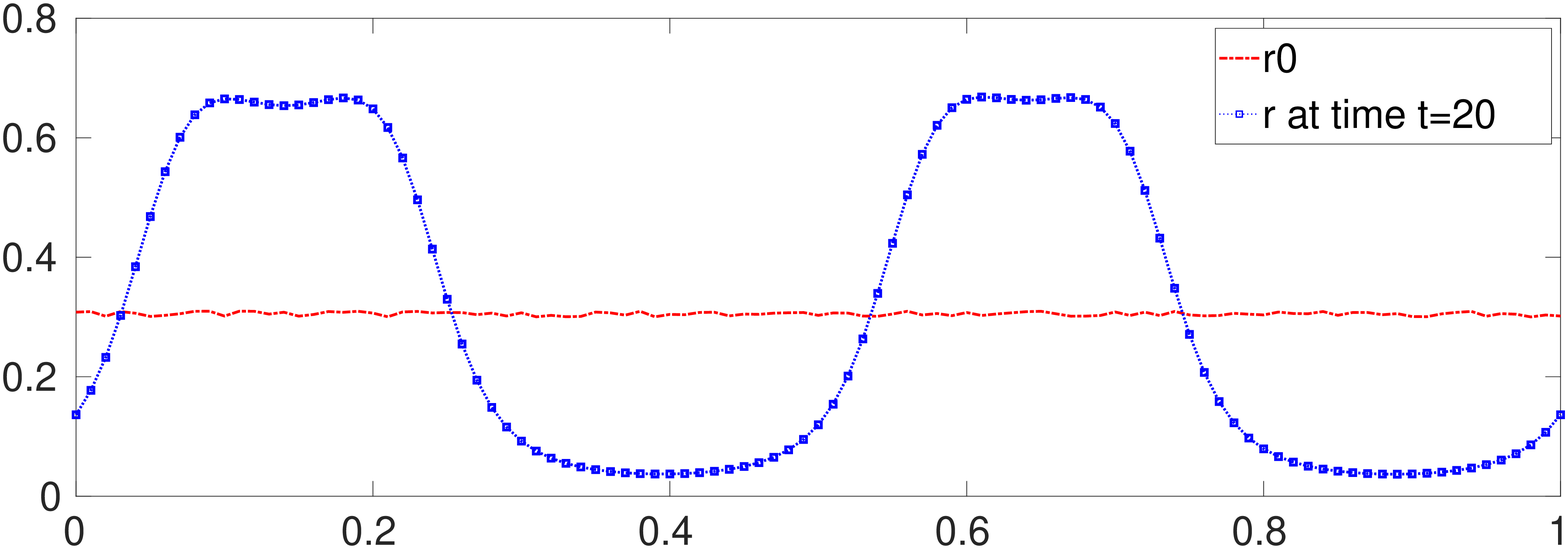}\label{fig:stab4}}
\caption{Evolution of the red particle density in the case of different perturbations. Depending on the magnitude and frequency of the perturbation, the density goes back to constant equilibrium or becomes unstable. }\label{fig:stab}
\end{center}
\end{figure}

\bigskip
Concerning the long time behaviour, we expect that the densities converge to a single aggregate in the long time limit. This type of coarsening dynamics has been observed in similar mean-field systems, see for example \cite{dolak2005keller}. If we start with random initial data in $1D$ and run the simulation for a long time,
we see in Figure \ref{fig4} that the number of bumps decreases in time. We expect a single bump per species as $t \rightarrow \infty$. However these coarsening dynamics are quite hard to resolve, since the convergence becomes exponentially slow.

\begin{figure}[ht]
\begin{center}
\includegraphics[scale=0.23]{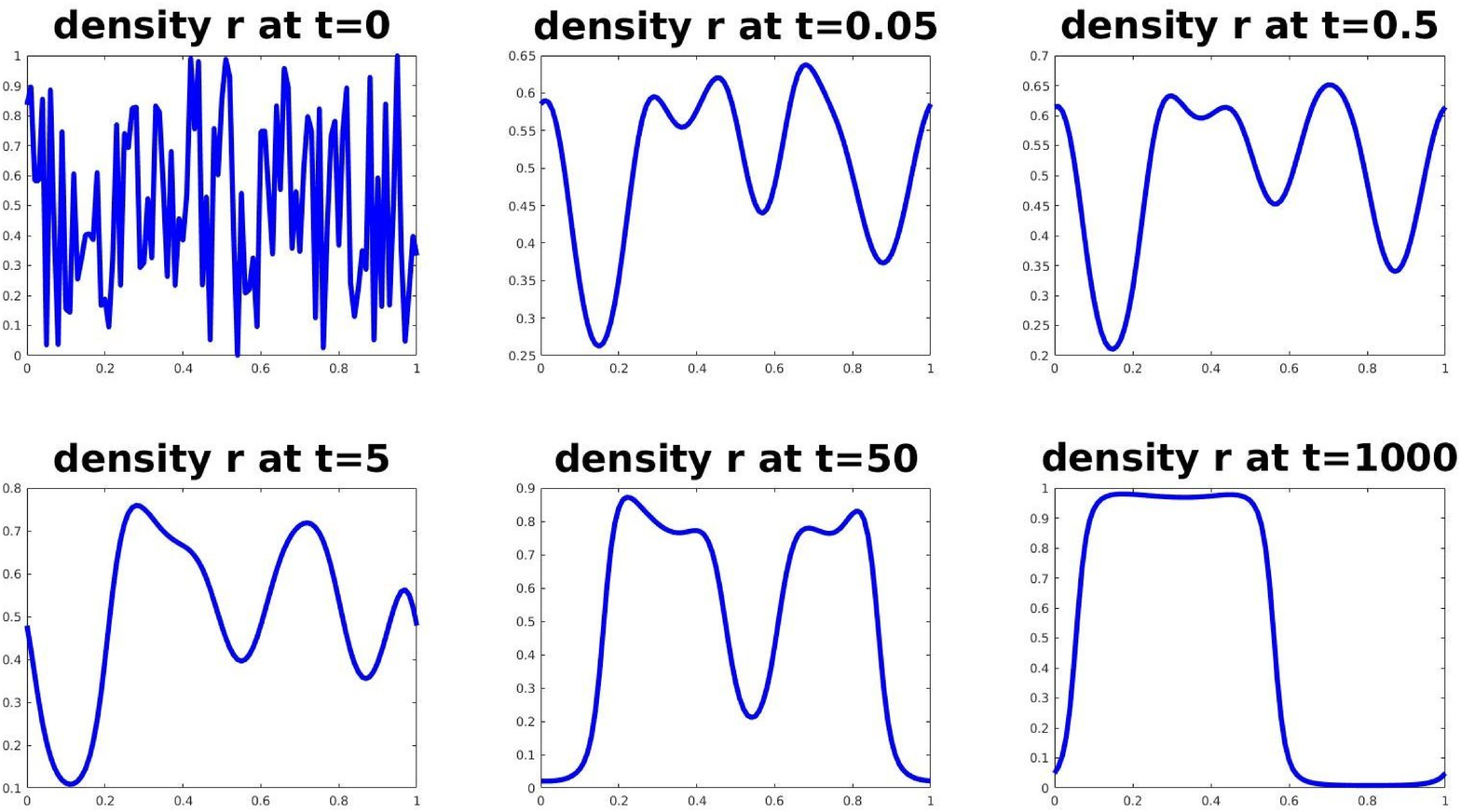}
\end{center}
\caption{Coarsening dynamics in the case of high frequency perturbations in the single species model.}
\label{fig4}
\end{figure}
\bigskip

\subsubsection{Two-dimensional case}

Next we discuss the stability of stationary solutions of the two species model in spatial dimension two, where we divide the domain in a $70\times 70$ grid and choose a time step size of $10^{-4}$.\\
The Fourier transform of $K_2(x)=K_2(x_1,x_2)$ is
\begin{align*}
\hat{K}_2(\xi_1,\xi_2)&= \frac{C_1}{\epsilon^2} \int_{-\infty}^\infty \int_{-\infty}^\infty e^{-\frac{C_2}{\epsilon} \sqrt{x_1^2+x_2^2}}e^{-2\pi i(\xi_1 x_1+\xi_2 x_2)}\, d x_1\, d x_2,
\end{align*}
which gives (after a change to polar coordinates)
\begin{align*}
\hat{K}_2(\mathcal{R}, \Phi )&= \frac{C_1}{\epsilon^2} \int_0^{2\pi}\int_0^\infty e^{-\frac{C_2}{\epsilon} s}e^{-2\pi i \mathcal{R} s \cos(\Phi-\theta)} s\, d s\, d \theta\\
&=\frac{2C_1C_2 \pi}{(\epsilon^2 4\pi^2 \mathcal{R}^2+C_2^2)^{\frac{3}{2}}},
\end{align*}
where $\mathcal{R}^2 =|\xi|^2 =\xi_1^2 +\xi_2^2$. As we are in two dimensions, the constants are given by $C_1=\frac{3}{4\pi}$ and $C_2=\sqrt{\frac{3}{2}}$.

Let $\epsilon=0.1$, $D_r=\frac{1}{10}e^{-10 K_2\ast r+5K_2\ast b}$, $D_b=\frac{1}{10}e^{-10 K_2\ast b+5K_2\ast r}$ and  $r_0=b_0=0.3$ denote a stationary state. 
If we consider a perturbation of the form
\begin{align*}
r(x)=0.3+ 0.02 \sin(4\pi x)\cos(4\pi y)\text{ and }~b(x)=0.3- 0.02 \sin(4\pi x)\cos(4\pi y),
\end{align*}
then one of eigenvalue of the matrix in $A$ defined in equation \eqref{system4} is positive and the expected instabilities arise, see Figure \ref{fig:den2d}. Note that the shape of the arising instabilities comes from the particular choice of the perturbation. 

\begin{figure}[!tbp]
  \centering
  \subfloat[]{\includegraphics[width=4cm, height=5cm]{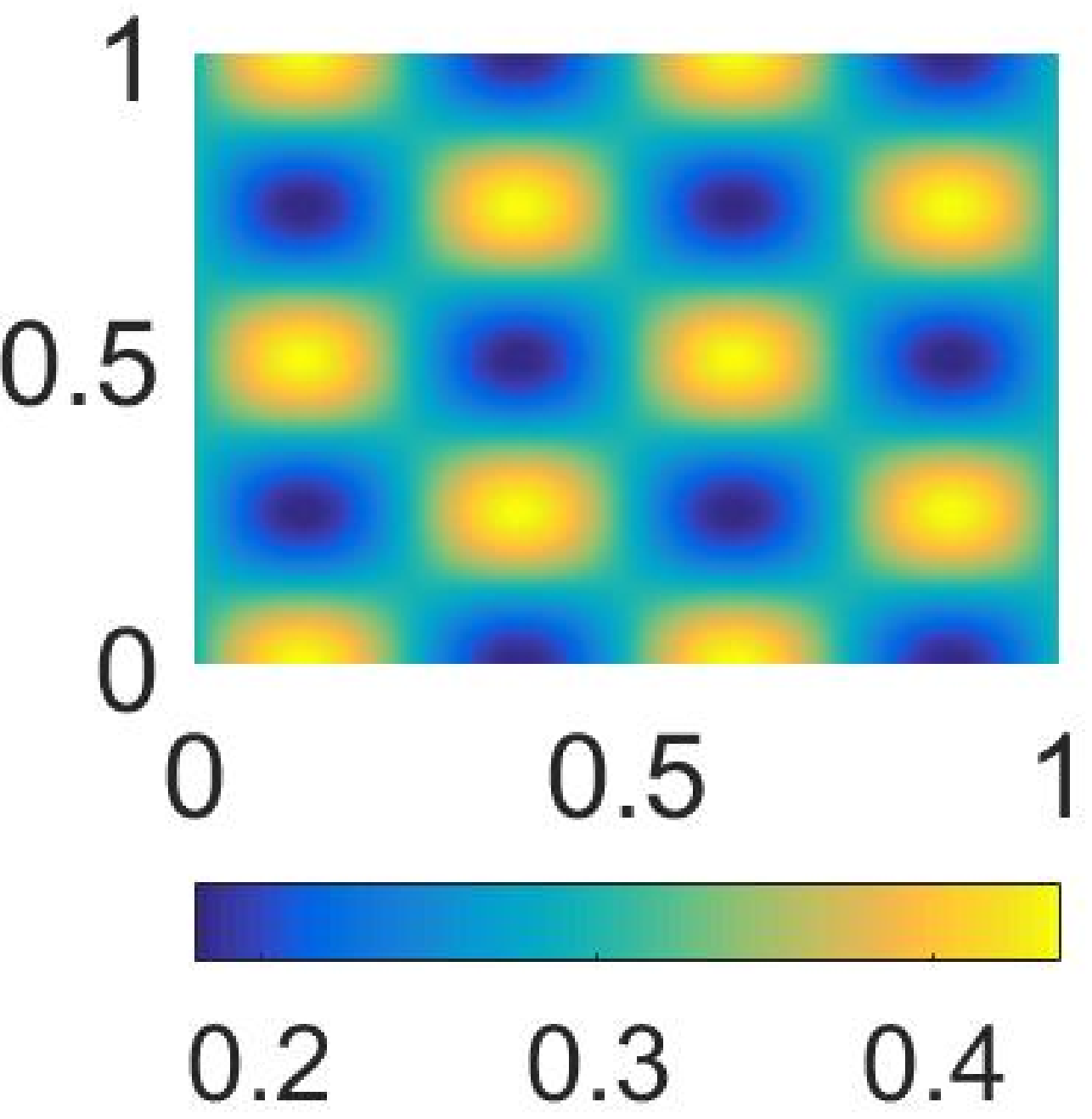}\label{fig:f1}}
  \hspace*{1em}
  \subfloat[]{\includegraphics[width=4cm, height=5cm]{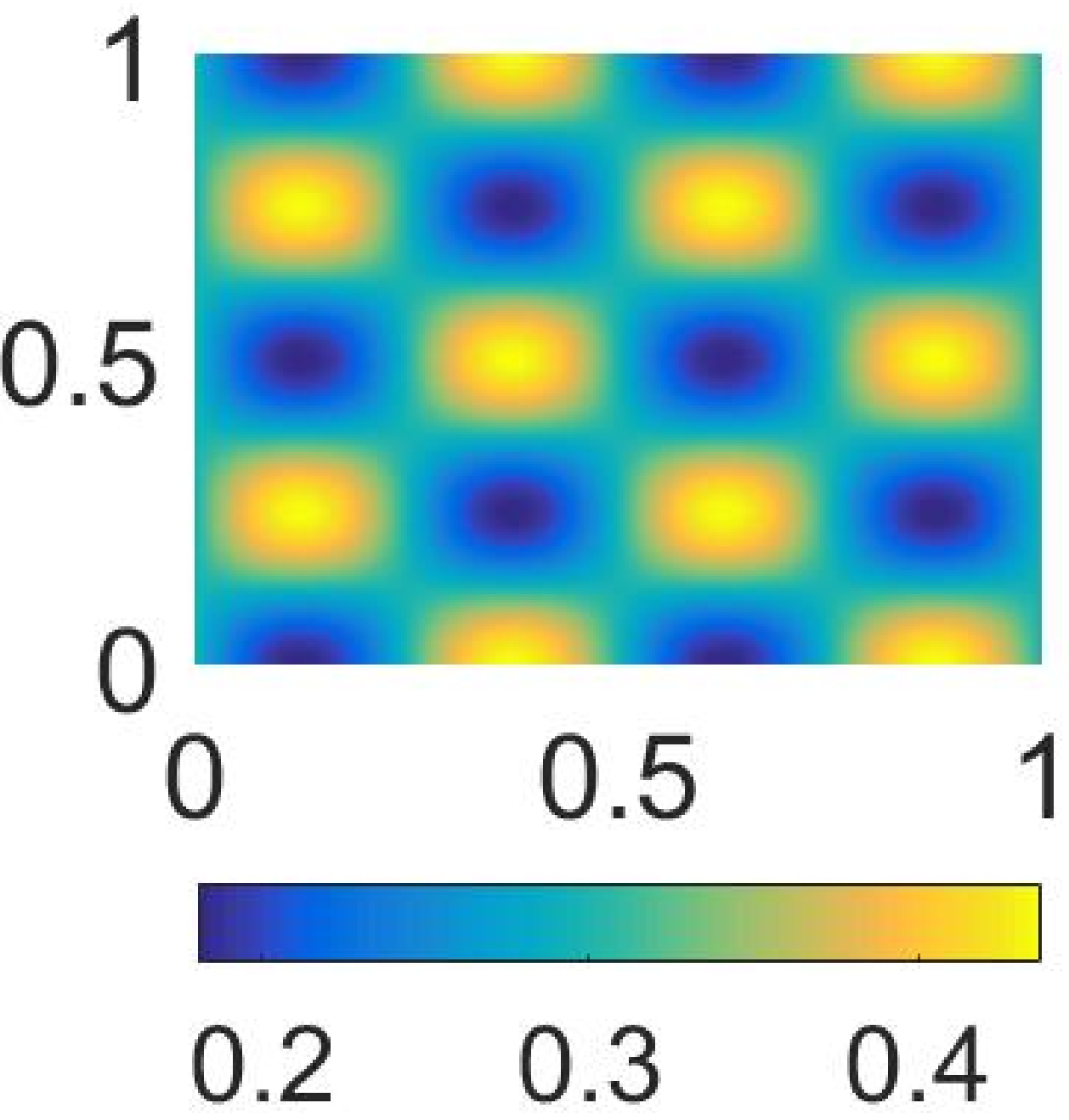}\label{fig:f2}}
\hspace*{1em}
  \subfloat[]{\includegraphics[width=4cm, height=5cm]{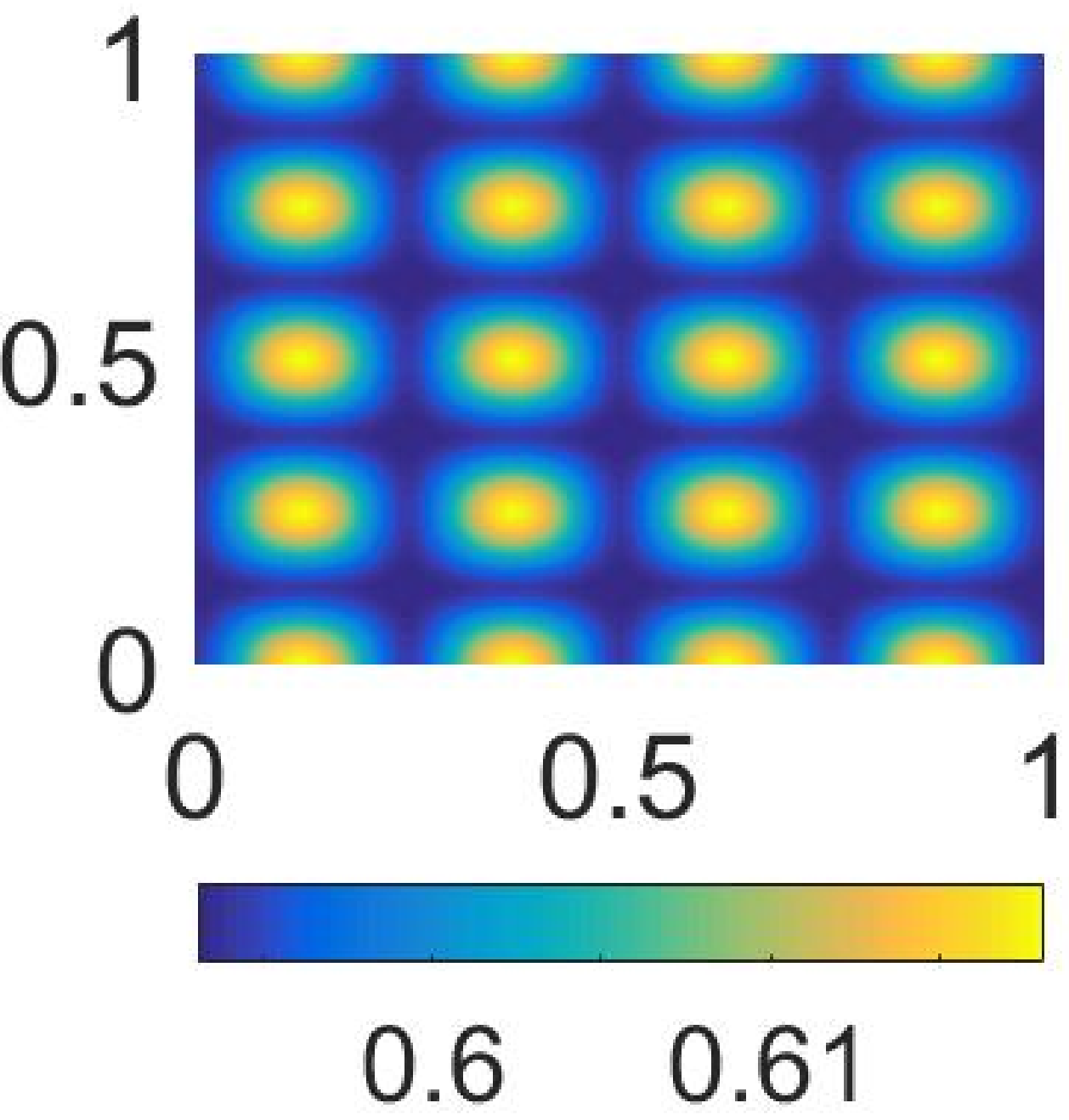}\label{fig:f3}}
\caption{Segregation dynamics in the two species model - the perturbations of the constant equilibrium solutions lead to the formation of aggregates in the densities $r$ and $b$. The plots show the particle densities $r$, $b$ and $\rho$ at time $t=10$.}\label{fig:den2d}
\end{figure}

Increasing the wavenumber $\xi$ in the perturbation, the matrix $A$ from equation \eqref{system4} becomes negative definite and the system is linearly stable, i.e. perturbations smooth out.

Similar to the one dimensional case, we also expect coarsening dynamics in the long time behaviour. However, convergence to non-trivial steady state is exponentially slow.

\subsection{Non-local jumps and local sensing}

For the integro differential equation model, we consider a kernel of the form
\begin{align*}
  K_1(x)=\frac{1}{\epsilon^{N+2}}C_1e^{-C_2\left|\frac{x}{\epsilon}\right|} \text{ with } C_1 = \frac{1}{2} \text{ and } C_2 = 1
  \end{align*}
in 1D. We have seen in Section \ref{lin_stab_int} that wavenumbers $\xi$ for which
\begin{align}
f(\xi)&:=\left(\hat{K}_1-\frac{1}{\epsilon^2}\right)D_r(r_0)[1-C_{rr}(1-r_0)r_0]
\end{align}
is positive, create instabilities. Due to the structure of the kernel, we always have that 
\[\hat{K}_1(\xi)=\frac{1}{\epsilon^2(\epsilon^2 4\pi^2\xi^2+1)}<\frac{1}{\epsilon^2}.\]
Therefore $[1-C_{rr}(1-r_0)r_0]$ has to be negative to observe instabilities. Depending on the choice of $C_{rr}$, the equilibrium solution $r_0=0.3$ is either linearly stable or unstable.  We set $\epsilon=0.05$ and consider a perturbation of the form
\[r=r_0+\epsilon \tilde{r}(x)=0.3+ 0.02 \sin(6\pi x),\]
Figure \ref{fig_integro} shows the result for $C_{rr}=4$ (stable) as well as $C_{rr}=5$ (unstable). In this simulation the domain $\Omega=[0,1]$ was divided into $500$ intervals and the time step set to $\Delta t=10^{-4}$.

\begin{figure}[ht]
\begin{center}
\subfloat{\includegraphics[width=0.8\textwidth]{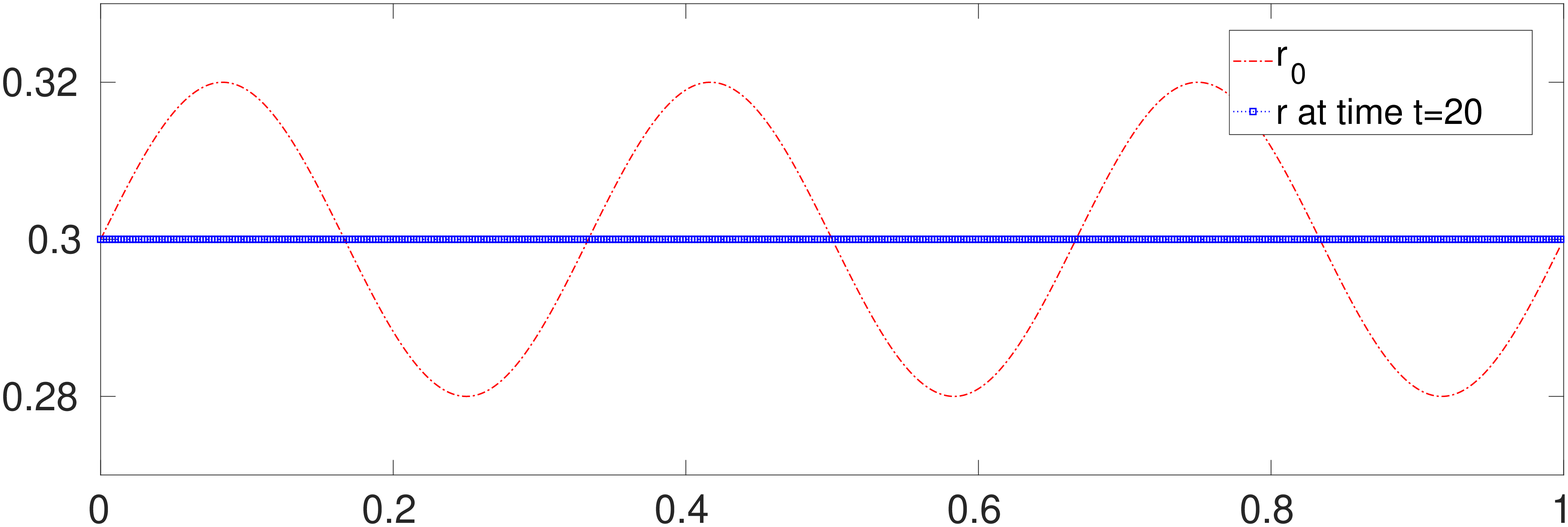}}\\
\subfloat{\includegraphics[width = 0.8\textwidth]{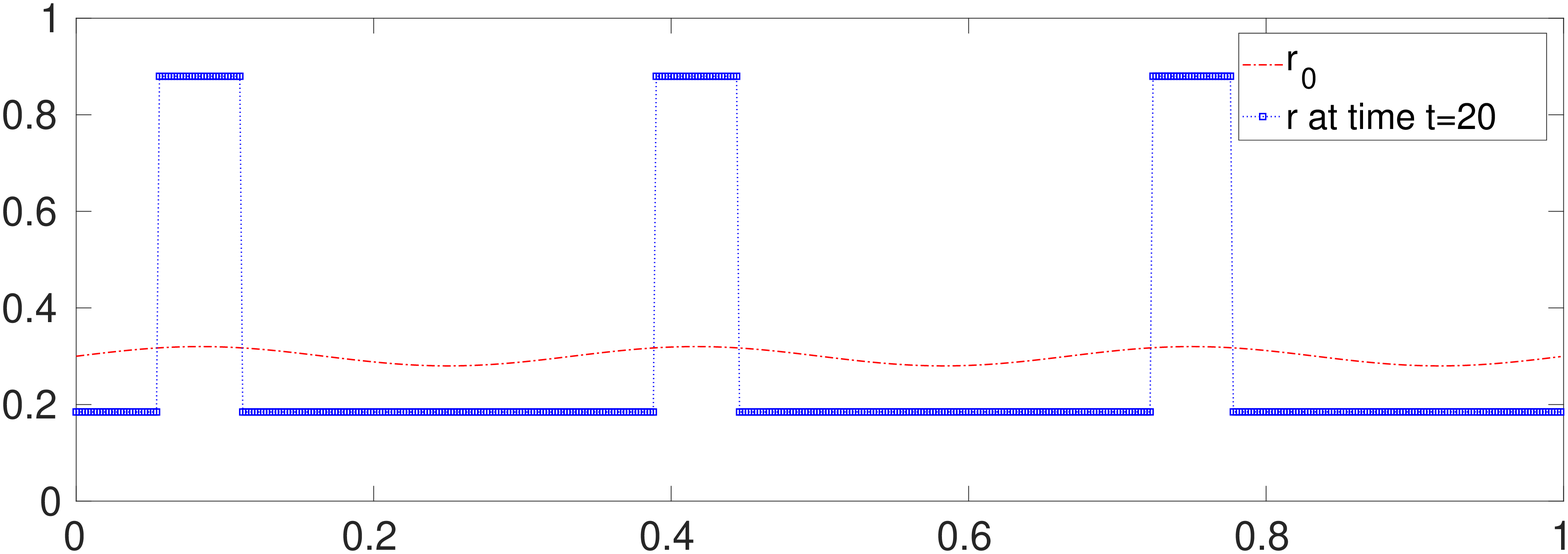}}
\end{center}
\caption{Evolution of the red particle density in the case of different perturbations. Depending on the magnitude and frequency of the perturbation, the density goes back to constant equilibrium or becomes unstable. }
\label{fig_integro}
\end{figure}

We observe a very different behaviour in the two models. While for the PDE model, the frequency of the perturbation plays an essential role concerning the (in)stability regimes, the (in)stability conditions for the integro-differential system do not necessarily depend on it as we have seen in the last example. Moreover, the formation of clusters occurs considerably faster and more intense in the integro-differential model.  

\section{Conclusion}
In this paper we discussed two mean-field models describing the dynamics of individuals belonging to a single or multiple groups, which move randomly in space. In both models the individual diffusivity depends on the locally perceived density - it decreases with the density of the own species and increases with the density of other species present. In the first case individuals move locally but sense the density in a certain region around them. In the second case individuals move globally with a diffusivity depending on the local density only. The preference for the own group leads to the formation of aggregated and segregated stationary states.  We analysed the linear stability of solutions and characterised the stationary states for the single species model in 1D. \\
The presented results serve as a starting point for future research projects. For example the characterisation of stationary states in higher space dimension or the correct resolution of the observed coarsening dynamics is still open. Furthermore the behaviour of solutions for different forms of diffusivities or interaction kernels is of future interest.

\subsection{Acknowledgements}
H.R. acknowledges support by the Austrian Science Fund (FWF) project F 65. The work of J.-F.P. has been supported by DFG via Grant 1073/1-2. M.T.W. acknowledges partial support from the Austrian Academy of Sciences via the New Frontiers Group NST-0001 and the EPSRC via the First Grant EP/P01240X/1. The work of C.S. has been supported by the Austrian Science Fund, Grants no. W1245, SFB 65, and W1261. \\

\noindent The authors thank the referee for checking the manuscript thoroughly, which significantly improved the quality and presentation of the paper.

\bibliographystyle{agsm}
\bibliography{schelling_bib}
\label{lastpage}
\end{document}